\documentclass[10pt]{article}

\usepackage[margin=0.85in]{geometry}
\usepackage{amsmath, amssymb, amsfonts, amsthm}
\usepackage{bm}
\usepackage{natbib}
\usepackage{graphicx}
\usepackage{booktabs}
\usepackage{setspace}
\usepackage{hyperref}
\usepackage{xcolor}
\usepackage{subfig}

\newtheorem{definition}{Definition}
\newtheorem{theorem}{Theorem}
\newtheorem{proposition}{Proposition}

\title{ \null\vspace{-3cm} \textsf{Identifiability and Information-Based Inference for Epidemic Transmission Models Under Partial Observation}}
\vspace{2cm}
\author{Md Asaduzzaman\\Department of Engineering, University of Staffordshire}
\date{}

\begin{document}
\maketitle

\begin{abstract}
Inference for epidemic transmission on dynamic networks is fundamentally limited by latent infection times, incomplete contact histories, imperfect observation, and external sources of infection. Although coherent likelihood formulations are available for partially observed epidemic processes, considerably less is known about the theoretical limits of statistical inference under such observation mechanisms. This paper develops a unified framework for studying identifiability and Fisher information in epidemic transmission models observed on dynamic contact networks. We establish conditions for structural and local identifiability, derive observed and complete-data information matrices, and quantify information loss arising from unobserved transmission events and missing network information through a missing-information decomposition. We further investigate how observation frequency, network coverage, and measurement accuracy influence parameter estimability and statistical efficiency, providing a principled basis for evaluating surveillance strategies. Simulation studies demonstrate that the proposed framework accurately characterises the relationship between observation design, statistical information, and parameter estimation, with theoretical predictions closely matching finite-sample performance. The proposed framework clarifies the relationship between observation design, identifiability, and inferential precision, and provides a theoretical foundation for statistical inference in partially observed epidemic transmission models.
\end{abstract}

\noindent\textbf{Keywords:} Identifiability; Fisher information; dynamic networks; epidemic inference; partially observed stochastic processes; missing information.

\linespread{1}
\section{Introduction}
Epidemic transmission on contact networks provides a natural setting in which disease spread is governed not only by biological progression, but also by the evolving pattern of interactions among individuals \citep{danon2011networks, bansal2010dynamic}. In contrast to classical compartmental models that assume homogeneous mixing, network-based formulations allow transmission to depend on who contacts whom and when, thereby capturing the role of local exposure, clustering, and temporal variation in social structure \citep{vecherin2026infection}. Such models are increasingly relevant for modern epidemic data, where contact histories may be observed from digital tracing, proximity sensors, or survey-based designs, but are rarely complete \cite{pellis2015eight}. Likelihood-based approaches are particularly attractive in this setting because they provide a coherent foundation for estimation, uncertainty quantification, and prediction \citep{breto2018modeling, tonsing2018profile, bu2022likelihood, abed2026spatial}. However, the inferential advantages of likelihood methods depend critically on the identifiability of the underlying parameters from the available observations \citep{bu2022likelihood, lam2022practical}.

A central difficulty in epidemic-network analysis is that the relevant data are typically only partially observed \citep{black2019importance, bu2022likelihood, kamkumo2025estimating}. Infection times are often inferred indirectly from symptoms, testing, or serology; recovery times may be recorded only coarsely; and contact networks are usually reconstructed from noisy or intermittent measurements \citep{eames2015six}. In addition, infections may arise from external sources outside the observed population, creating further confounding between internal transmission and outside exposure. These limitations are not merely computational inconveniences: they determine which parameters can be learned from the data at all. As a result, identifiability should be treated as a primary theoretical property of the model, rather than as an afterthought to be checked only after fitting \citep{lam2022practical, preston2025think}.

Existing work on epidemic inference over networks has made substantial progress in constructing stochastic and likelihood-based models for dynamic contact processes, including Bayesian data augmentation methods for partially observed outbreaks and continuous-time Markov formulations for epidemics on adaptive networks  \citep{bu2022likelihood, fintzi2017efficient, huang2024detecting}. More recent work has extended these ideas to heterogeneous transmission and stochastic EM algorithms under partial observation \citep{yang2012hybrid, bu2025stochastic}. At the same time, the broader identifiability literature has shown that even well-specified epidemic models can become non-identifiable once only aggregated or incomplete observations are available \citep{browning2022efficient, saucedo2024comparative}. What is still lacking is a unified framework that connects identifiability, information loss, and partial observation in epidemic models on dynamic networks.

This paper develops structural and local identifiability in terms of the distribution of the observed data. This perspective makes explicit that identifiability is a property of the observation mechanism as well as of the latent process. We then derive complete-data and observed-data information representations, using the missing-information principle to quantify how much inferential content is lost when infection times, contact histories, or external exposure pathways are unobserved. This decomposition is especially useful because it separates lack of information due to latent epidemiological events from lack of information due to the network itself being only partially recorded.

The resulting theory has two practical implications. First, it clarifies when transmission, progression, and external infection parameters are estimable from the observed data, and when they are only weakly identifiable or not identifiable at all. Second, it provides guidance for study design by showing how observation frequency, contact sampling, and reporting accuracy affect the amount of information available for inference. In this sense, the paper contributes both to the theory of interacting stochastic processes and to the design of epidemiological studies on evolving networks.

The remainder of the paper is organised as follows. Section \ref{sec2:frame} introduces the epidemic--network observation framework. Section \ref{sec3:iden} defines structural and local identifiability. Section \ref{sec4:inf-struc} develops complete-data and observed-data Fisher information representations. Section \ref{sec5:inf-loss} studies information loss under partial observation. Section \ref{sec6:asymp} considers asymptotic identifiability under increasing population size and increasing observation frequency. Section \ref{sec7:sim} presents simulation studies, and Section \ref{sec8:con} discusses implications for epidemic study design.

\subsection{Existing Literature}
This paper builds on four related strands of literature: stochastic epidemic modelling, likelihood-based inference for partially observed epidemic processes, epidemic models on static and dynamic networks, and identifiability and information theory for incomplete-data models. Classical stochastic epidemic models provide the probabilistic foundation for much of this work \citep{daley1999epidemic}. Standard susceptible--infectious--removed and related models have been studied extensively as continuous-time Markov processes, with inference commonly formulated through infection and removal event histories \citep{hethcote2000mathematics}. Early statistical developments established likelihood and Bayesian approaches for epidemic data, while also recognising that real epidemics are rarely fully observed \citep{bu2022likelihood, reeves2024model, morsomme2025exact}. In particular, infection times are often latent, and observed data may consist only of removal times, symptom onset times, diagnostic results, or final outbreak sizes \citep{becker1999statistical, morsomme2025exact, kamkumo2025estimating}. The work of \citet{o1999bayesian} is important in this respect, showing how Markov chain Monte Carlo methods can infer missing infection histories jointly with epidemic parameters in partially observed stochastic epidemics.

A second relevant strand concerns likelihood inference and missing-data information for stochastic processes. Event-history and counting-process methods provide a natural language for epidemic models because infection, progression, recovery, and contact events can be represented through state-dependent intensities. This connects epidemic inference to survival analysis and continuous-time Markov process theory. In incomplete-data settings, observed likelihoods are obtained by integrating over unobserved event histories, which is often computationally and theoretically challenging. The missing-information identity of \citet{louis1982finding} is central here because it expresses observed information as complete-data information minus the information lost through unobserved components. This identity provides a natural basis for quantifying the inferential cost of latent infection times and missing contact histories.

The third strand concerns epidemics on networks. Network epidemic models generalise homogeneous-mixing assumptions by allowing transmission to occur along edges of a contact graph. Reviews and monographs such as \citet{pastor2015epidemic} and \citet{istvan2019mathematics} show how network structure affects epidemic thresholds, final sizes, and intervention effects. Dynamic network models go further by allowing contacts to change during an outbreak. \citet{volz2007susceptible} formulated susceptible--infectious--removed epidemics on dynamic contact networks and showed how changing contacts alter epidemic dynamics. Such models are especially relevant for infectious disease studies in which contact behaviour changes because of symptoms, isolation, intervention, or risk perception.

A fourth strand studies inference when the contact network is unknown or partially observed. In this setting, epidemic data contain information about both the transmission process and the latent contact structure. \citet{groendyke2011bayesian} developed Bayesian methods for inferring contact-network parameters from epidemic data, illustrating the feasibility of joint epidemic-network inference. Related work has shown that uncertainty in the network can affect estimates of transmission parameters and uncertainty intervals \citep{bu2022likelihood, reeves2024model, bu2025stochastic, morsomme2025exact}. More recent approaches have begun to model imperfectly observed contact networks directly \citep{danon2011networks, heesterbeek2015modeling, almutiry2021contact}. However, a general framework for determining when transmission parameters are identifiable, and how much information is lost through partial network observation, remains limited.

The present work complements recent advances in likelihood-based inference for partially observed epidemic processes \citep{asaduzzaman2026complete} by establishing the theoretical foundations of identifiability and statistical information under incomplete observation. It addresses this gap by developing an identifiability and information framework for epidemic transmission models on partially observed dynamic networks. The contribution is not a new disease-specific epidemic model, but a statistical framework for understanding what can be learned from incomplete surveillance data. By defining identifiability through the observed-data law, decomposing complete and observed Fisher information, and characterising information loss due to latent infection times, missing contacts, and measurement error, the paper connects epidemic likelihood inference with practical questions of surveillance design. This perspective is well suited to biostatistical applications in which incomplete observation is unavoidable and the reliability of parameter interpretation is central.

\subsection{Contributions}
The paper makes the following contributions.
\begin{enumerate}
\item We define structural and local identifiability through the observed-data law, thereby making identifiability explicitly dependent on the surveillance design rather than only on the latent epidemic model.
\item We characterise how observation frequency, network coverage, measurement accuracy, and external infection pressure affect weak identifiability, parameter confounding, and the conditioning of the observed information matrix.
\item We provide simulation studies under high, moderate, and sparse observation regimes to demonstrate how information loss affects bias, RMSE, coverage, interval width, and empirical identifiability boundaries.
\end{enumerate}

\section{Background on Epidemic--Network Observation Framework}
\label{sec2:frame}
We consier a classical continuous-time susceptible--exposed--infectious--removed (SEIR) model embedded within a dynamic contact network as in \citep{asaduzzaman2026complete}. At any time \(t\), each individual occupies one of four epidemiological states: susceptible (\(S\)), exposed but not yet infectious (\(E\)), infectious (\(I\)), or removed (\(R\)) through recovery, isolation, or death. Disease transmission occurs when a susceptible individual has effective contact with an infectious individual. The parameter \(\beta\) denotes the transmission rate per effective contact, while \(\eta_{ab}\) represents the contact intensity between individuals in epidemiological states \(a\) and \(b\), thereby characterising the dynamic network structure through which infection spreads. Consequently, the infection hazard is jointly determined by the biological transmissibility of the pathogen and the underlying contact network. Following infection, individuals enter the exposed state, progress to the infectious state at rate \(\kappa\), and subsequently transition to the removed state at rate \(\gamma\). The resulting latent SEIR--network process defines the complete epidemic trajectory, which forms the underlying stochastic process from which the partially observed surveillance data are generated.

\begin{figure}[h]
\centering
\includegraphics[width=0.75\textwidth]{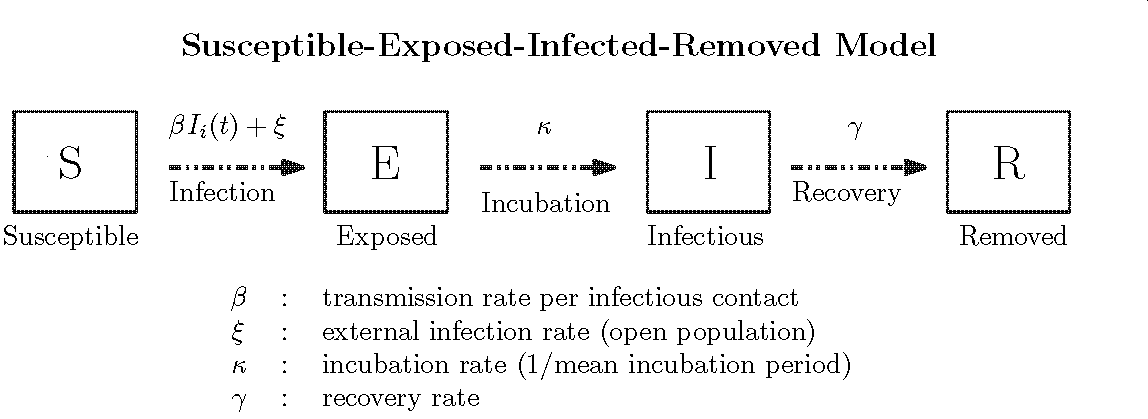}
\end{figure}

Let $\mathcal{V}_N=\{1,\ldots,N\}$ denote a fixed population observed over the time interval $[0,T]$. For each $t\in[0,T]$, define the latent state
\[
Z(t)=\big(X(t),A(t)\big),
\]
where $X(t)=(X_1(t),\ldots,X_N(t))$ is the vector of individual epidemic states and
\[
A(t)=\{A_{ij}(t):1\le i<j\le N\}
\]
is an undirected dynamic contact network. The state space is
\[
\mathcal{Z}=\mathcal{X}^N\times\mathcal{A}_N,
\qquad
\mathcal{X}=\{S,E,I,R\},
\]
where $\mathcal{A}_N=\{0,1\}^{N(N-1)/2}$ is the space of simple undirected graphs on $\mathcal{V}_N$.

We assume that $\{Z(t):0\le t\le T\}$ is a continuous-time Markov process with transition intensities determined by epidemic progression and network evolution. The full parameter vector is written as
\[
\theta=(\theta_{\mathrm{epi}},\theta_{\mathrm{net}},\theta_{\mathrm{obs}}),
\]
where
\[
\theta_{\mathrm{epi}}=(\beta,\xi,\kappa,\gamma),\qquad
\theta_{\mathrm{net}}=\{\eta_{ab},\tau_{ab}:a,b\in\mathcal{X}\},
\]
and $\theta_{\mathrm{obs}}$ contains the observation-process parameters.

\subsection{Latent Epidemic Process}

For each individual $i\in\mathcal{V}_N$, the disease state satisfies
\[
X_i(t)\in\{S,E,I,R\}.
\]
Let
\[
\mathcal{I}(t)=\{j:X_j(t)=I\}
\]
denote the set of infectious individuals at time $t$. The instantaneous infectious exposure experienced by susceptible individual $i$ is
\[
C_i(t)=\sum_{j\neq i}A_{ij}(t)\mathbf{1}\{X_j(t)=I\}.
\]
Conditional on the current latent state $Z(t)$, the disease transition intensities are
\[
q_i^{SE}\{Z(t)\}
=
\mathbf{1}\{X_i(t)=S\}\{\beta C_i(t)+\xi\},
\]
\[
q_i^{EI}\{Z(t)\}
=
\mathbf{1}\{X_i(t)=E\}\kappa,
\qquad
q_i^{IR}\{Z(t)\}
=
\mathbf{1}\{X_i(t)=I\}\gamma.
\]
Here $\beta$ is the per-contact transmission rate, $\xi$ is the external infection rate, $\kappa$ is the exposed-to-infectious progression rate, and $\gamma$ is the recovery rate.

The aggregate epidemic intensity is therefore
\[
\Lambda_{\mathrm{epi}}\{Z(t)\}
=
\sum_{i=1}^N
\left[
q_i^{SE}\{Z(t)\}
+
q_i^{EI}\{Z(t)\}
+
q_i^{IR}\{Z(t)\}
\right].
\]

\subsection{Dynamic Network Process}

The network process evolves through dyad-level link formation and dissolution. For each unordered pair $(i,j)$, $i<j$, the contact indicator satisfies
\[
A_{ij}(t)\in\{0,1\}.
\]
Conditional on the current epidemic states
\[
(X_i(t),X_j(t))=(a,b),
\]
the transition intensities of the dyad process are
\[
q_{ij}^{0\to1}\{Z(t)\}
=
\mathbf{1}\{A_{ij}(t)=0\}\eta_{ab},
\]
and
\[
q_{ij}^{1\to0}\{Z(t)\}
=
\mathbf{1}\{A_{ij}(t)=1\}\tau_{ab}.
\]
We assume symmetry,
\[
\eta_{ab}=\eta_{ba},
\qquad
\tau_{ab}=\tau_{ba},
\]
so that the network is undirected. The aggregate network intensity is
\[
\Lambda_{\mathrm{net}}\{Z(t)\}
=
\sum_{1\le i<j\le N}
\left[
q_{ij}^{0\to1}\{Z(t)\}
+
q_{ij}^{1\to0}\{Z(t)\}
\right].
\]

The total latent-process intensity is
\[
\Lambda\{Z(t)\}
=
\Lambda_{\mathrm{epi}}\{Z(t)\}
+
\Lambda_{\mathrm{net}}\{Z(t)\}.
\]

\subsection{Observation Process}

Let observations be collected at times
\[
0<t_1<\cdots<t_m\le T.
\]
The observed data are denoted by
\[
O=\{Y(t_r),B(t_r):r=1,\ldots,m\},
\]
where $Y(t_r)$ represents disease-related observations and $B(t_r)$ represents contact-network observations.

The observation model is conditionally independent given the latent process:
\[
p_\theta(O\mid Z)
=
\prod_{r=1}^m
p_{\theta_{\mathrm{obs}}}\{Y(t_r)\mid X(t_r)\}
p_{\theta_{\mathrm{obs}}}\{B(t_r)\mid A(t_r)\}.
\]

For symptom observations, let $Y_i(t_r)\in\{0,1\}$ denote whether symptoms are reported for individual $i$ at time $t_r$. A simple Bernoulli observation model is
\[
P\{Y_i(t_r)=1\mid X_i(t_r)=E\}=p_E,
\qquad
P\{Y_i(t_r)=1\mid X_i(t_r)=I\}=p_I,
\]
with corresponding probabilities for $S$ and $R$ specified according to the assumed symptom-reporting mechanism.

For contact observations, let $B_{ij}(t_r)\in\{0,1\}$ denote the observed contact status of dyad $(i,j)$ at time $t_r$. We assume a misclassification model
\[
P\{B_{ij}(t_r)=1\mid A_{ij}(t_r)=1\}=s,
\]
and
\[
P\{B_{ij}(t_r)=0\mid A_{ij}(t_r)=0\}=c,
\]
where $s$ and $c$ denote the sensitivity and specificity of contact observation.

This formulation separates the latent epidemic--network process from the observation process while preserving their probabilistic dependence. The resulting model is a partially observed continuous-time Markov process on a dynamic graph, and identifiability is therefore determined by both the latent transition structure and the information retained by the observation mechanism.

\section{Identifiability Framework}
\label{sec3:iden}
Let $\Theta\subseteq\mathbb{R}^p$ denote the parameter space and let
\[
\theta=
(\beta,\xi,\kappa,\gamma,\eta,\tau,p_E,p_I,s,c)\in\Theta
\]
collect the epidemic, network, and observation parameters. For each $\theta\in\Theta$, let $P_\theta^O$ denote the probability law induced on the observed data
\[
O=\{Y(t_r),B(t_r):r=1,\ldots,m\}.
\]
Equivalently, if $Z$ denotes the latent epidemic--network trajectory, then
\[
p_\theta(O)
=
\int p_\theta(O\mid Z)\,p_\theta(Z)\,dZ,
\]
where the integral is understood over the space of latent continuous-time paths. Thus, identifiability is defined through the observed-data law rather than through the complete-data process alone.

\subsection{Structural Identifiability}
\begin{definition}[Structural identifiability]
A parameter value $\theta\in\Theta$ is structurally identifiable from the observation process $O$ if
\[
P_\theta^O=P_{\theta'}^O
\quad\Longrightarrow\quad
\theta=\theta'
\]
for all $\theta'\in\Theta$.
\end{definition}

Equivalently, the parameter is structurally identifiable if the mapping
\[
\mathcal{M}_O:\Theta\to\mathcal{P}(\mathcal{O}),
\qquad
\mathcal{M}_O(\theta)=P_\theta^O,
\]
is injective, where $\mathcal{P}(\mathcal{O})$ denotes the set of probability measures on the observation space $\mathcal{O}$. This formulation makes explicit that identifiability depends jointly on the latent epidemic--network dynamics and the observation mechanism. A parameter may be identifiable under complete observation of $Z$ but fail to be identifiable under the reduced observation $O$.

\subsection{Local Identifiability}

Let
\[
\ell_O(\theta)=\log p_\theta(O)
\]
denote the observed-data log-likelihood. Suppose $\ell_O(\theta)$ is twice differentiable in a neighbourhood of $\theta_0$. Define the observed score vector
\[
S_O(\theta)=
\frac{\partial \ell_O(\theta)}{\partial\theta}
\]
and the observed Fisher information matrix
\[
I_O(\theta)=
E_\theta\left[
S_O(\theta)S_O(\theta)^\top
\right]
=
-
E_\theta\left[
\frac{\partial^2\ell_O(\theta)}
{\partial\theta\partial\theta^\top}
\right],
\]
under standard regularity conditions.

\begin{definition}[Local identifiability]
The parameter $\theta$ is locally identifiable at $\theta_0$ if there exists a neighbourhood $U(\theta_0)\subseteq\Theta$ such that
\[
P_\theta^O=P_{\theta_0}^O,
\qquad
\theta\in U(\theta_0),
\]
implies
\[
\theta=\theta_0.
\]
\end{definition}

A sufficient regularity condition for local identifiability is
\[
\operatorname{rank}\{I_O(\theta_0)\}=p.
\]
If $I_O(\theta_0)$ is rank deficient, then there exists at least one direction $v\neq0$ such that
\[
v^\top I_O(\theta_0)v=0,
\]
indicating that infinitesimal perturbations of $\theta_0$ in direction $v$ are not distinguishable from the observed data to second order.

\subsection{Observation Equivalence Classes}

Partial observation induces equivalence classes of parameter values. For a given $\theta\in\Theta$, define
\[
[\theta]_O
=
\{\theta'\in\Theta:P_{\theta'}^O=P_\theta^O\}.
\]
The model is structurally identifiable at $\theta$ if and only if
\[
[\theta]_O=\{\theta\}.
\]
If $[\theta]_O$ contains more than one element, the parameters in this class are observationally equivalent and cannot be distinguished by any procedure based solely on $O$.

In epidemic--network models, such equivalence classes arise naturally. For example, incomplete contact observation may induce confounding between the internal transmission rate $\beta$ and the external infection rate $\xi$. Similarly, latent incubation periods may induce dependence between the infection rate $\beta$ and the progression rate $\kappa$, while imperfect symptom reporting may confound disease-state parameters with observation parameters $(p_E,p_I)$. Thus, identifiability is determined not only by the mechanistic structure of the epidemic process but also by the information retained by the observation process.

\subsection{Parameter-Specific Identifiability}
In many applications, interest lies in a lower-dimensional component
\[
\psi=g(\theta),
\]
such as the transmission rate $\beta$, the external infection rate $\xi$, or a reproduction-related functional. We say that $\psi$ is identifiable from $O$ if
\[
P_\theta^O=P_{\theta'}^O
\quad\Longrightarrow\quad
g(\theta)=g(\theta').
\]
This distinction is important because the full parameter vector may not be identifiable even when certain scientific functionals are. For example, individual network formation and dissolution parameters may be weakly identified, while a lower-dimensional transmission functional may remain estimable under suitable observation regimes.

\subsection{Identifiability Under Observation Regimes}
Let $\mathcal{R}$ denote an observation regime specifying symptom-reporting frequency, contact-sampling frequency, and measurement accuracy. The induced observed-data law may be written as $P_{\theta,\mathcal{R}}^O$. Identifiability is therefore regime-dependent. A parameter value $\theta$ may be identifiable under a high-information regime $\mathcal{R}_1$ but not under a sparse regime $\mathcal{R}_2$. We write
\[
\mathcal{M}_{O,\mathcal{R}}(\theta)=P_{\theta,\mathcal{R}}^O
\]
to emphasise this dependence.

This observation motivates the information analysis in the following sections. By studying the rank and conditioning of $I_O(\theta)$ under different regimes $\mathcal{R}$, we can characterise when the available observations contain sufficient information to distinguish transmission, progression, network, and observation parameters.

\section{Information Structure of the Partially Observed Epidemic--Network Process}
\label{sec4:inf-struc}
The inferential properties of the model are determined by the amount of information retained in the observation process about the underlying epidemic--network trajectory. Since the latent process
\[
Z=\{X(t),A(t):0\le t\le T\}
\]
is only partially observed through
\[
O=\{Y(t_r),B(t_r):r=1,\ldots,m\},
\]
it is useful to distinguish between information available under complete observation of $Z$ and information available under the observed data alone. This distinction provides a natural framework for quantifying the effect of latent infection times, missing contacts, and observation error on parameter estimation.

\subsection{Complete-Data Fisher Information}

Let
\[
L_C(\theta)=p_\theta(Z,O)
\]
denote the complete-data likelihood and define the corresponding log-likelihood by
\[
\ell_C(\theta)=\log L_C(\theta).
\]

The complete-data score function is
\[
S_C(\theta)
=
\frac{\partial \ell_C(\theta)}
{\partial\theta},
\]
and the complete-data Fisher information matrix is
\[
I_C(\theta)
=
E_\theta
\left[
S_C(\theta)S_C(\theta)^\top
\right].
\]

Under standard regularity conditions,
\[
I_C(\theta)
=
-
E_\theta
\left[
\frac{\partial^2\ell_C(\theta)}
{\partial\theta\partial\theta^\top}
\right].
\]

The matrix $I_C(\theta)$ represents the maximum information available for inference if the epidemic trajectory, network evolution, and observation process were observed without error. It therefore serves as an upper bound on the information obtainable under any observation regime.

For continuous-time epidemic processes, $I_C(\theta)$ admits an event-history representation involving the compensators of epidemic and network transition counting processes. Consequently, complete-data information accumulates through observed infection, progression, recovery, link-formation, and link-dissolution events.

\subsection{Observed Fisher Information}

The observed-data likelihood is obtained by integrating over all latent epidemic--network trajectories:
\[
L_O(\theta)
=
p_\theta(O)
=
\int p_\theta(Z,O)\,dZ.
\]

The observed-data log-likelihood is
\[
\ell_O(\theta)
=
\log p_\theta(O),
\]
with corresponding score function
\[
S_O(\theta)
=
\frac{\partial \ell_O(\theta)}
{\partial\theta}.
\]

The observed Fisher information matrix is defined by
\[
I_O(\theta)
=
E_\theta
\left[
S_O(\theta)S_O(\theta)^\top
\right]
=
-
E_\theta
\left[
\frac{\partial^2\ell_O(\theta)}
{\partial\theta\partial\theta^\top}
\right].
\]

Unlike the complete-data information, $I_O(\theta)$ depends explicitly on the observation regime. Sparse symptom observations, infrequent network measurements, or low observation accuracy reduce the curvature of the observed likelihood and therefore weaken parameter identifiability. In particular, rank deficiencies in $I_O(\theta)$ correspond to locally non-identifiable directions in parameter space.

\subsection{Information Decomposition and Missing Information}

The relationship between complete-data and observed-data information follows from the missing-information principle. Define the conditional covariance matrix of the complete-data score given the observed data:
\[
I_M(\theta)
=
E_\theta
\left[
\mathrm{Var}_\theta
\left\{
S_C(\theta)\mid O
\right\}
\right].
\]

The matrix $I_M(\theta)$ measures the uncertainty arising from unobserved epidemic states, latent infection times, unobserved network trajectories, and other missing components of the process.

The following theorem provides a decomposition of information.

\begin{theorem}[Louis Identity]
Assume that differentiation and integration may be interchanged and that the relevant expectations exist. Then
\[
I_C(\theta)
=
I_O(\theta)
+
I_M(\theta).
\]
Equivalently,
\[
I_O(\theta)
=
I_C(\theta)-I_M(\theta).
\]
\end{theorem}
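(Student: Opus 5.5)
The plan is to prove the identity in two stages. First we obtain the familiar pointwise (conditional) form of Louis' identity. Then we take expectations under $P_\theta$.

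Write the complete-data density as $p_\theta(Z,O)=p_\theta(O)\,p_\theta(Z\mid O)$, so that
\[
\ell_C(\theta)=\ell_O(\theta)+\log p_\theta(Z\mid O).
\]
Differentiating once gives $S_C(\theta)=S_O(\theta)+S_{Z\mid O}(\theta)$, where $S_{Z\mid O}=\partial_\theta\log p_\theta(Z\mid O)$.

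Because we may interchange differentiation and integration, $\int \partial_\theta p_\theta(Z\mid O)\,dZ=\partial_\theta 1=0$. Hence $E_\theta[S_{Z\mid O}(\theta)\mid O]=0$. This yields the score identity
\[
S_O(\theta)=E_\theta\{S_C(\theta)\mid O\}.
\]
One can also see this directly by differentiating $L_O(\theta)=\int p_\theta(Z,O)\,dZ$ under the integral sign and dividing by $p_\theta(O)$.

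Next, apply the conditional variance decomposition to the random vector $S_C(\theta)$ under $P_\theta$:
\[
\mathrm{Var}_\theta\{S_C\}=\mathrm{Var}_\theta\big[E_\theta\{S_C\mid O\}\big]+E_\theta\big[\mathrm{Var}_\theta\{S_C\mid O\}\big].
\]
The left side equals $E_\theta[S_CS_C^\top]=I_C(\theta)$, since the complete-data score has mean zero. This again follows from interchanging $\partial_\theta$ with $\int\!\!\int p_\theta(Z,O)\,dZ\,dO=1$.

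By the score identity, the first term on the right is $\mathrm{Var}_\theta\{S_O\}=E_\theta[S_OS_O^\top]=I_O(\theta)$, using $E_\theta S_O=0$. The second term is $I_M(\theta)$ by definition. This gives $I_C=I_O+I_M$, and the equivalent form $I_O=I_C-I_M$ follows by rearranging.

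As a cross-check on the Hessian form stated for $I_O$, I would also record the second-derivative version. Differentiating $S_O=E_\theta[S_C\mid O]$ once more, that is, differentiating $\int S_C\,p_\theta(Z\mid O)\,dZ$, gives
\[
-\partial^2_\theta\ell_O=E_\theta[-\partial^2_\theta\ell_C\mid O]-\mathrm{Var}_\theta(S_C\mid O).
\]
This is Louis' pointwise formula. Taking $E_\theta$ recovers the same identity. However, the variance-decomposition route is cleaner and needs only first derivatives plus existence of second moments.

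The main obstacle is justifying the interchange of differentiation and integration over the space of latent continuous-time paths. There, $p_\theta(Z)$ is a density of the path law relative to a dominating measure: the Jacod likelihood of the counting processes of infection, progression, recovery and link events, with $\exp\{-\int_0^T\Lambda\,dt\}$ compensator terms. The theorem assumes this interchange explicitly, so I would only remark on when it holds. On a finite horizon with finite $N$, the intensities $q_i^{SE}$ and the dyad rates $q_{ij}^{0\to1}, q_{ij}^{1\to0}$ are linear in the parameters and bounded. So for $\theta$ in a compact neighbourhood within the interior of $\Theta$, the score is dominated by a polynomial in the total event count. The event count has exponential moments because the total rate is bounded, and dominated convergence then applies.

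The conditional density $p_\theta(Z\mid O)$ is well defined provided $p_\theta(O)>0$. This holds, for instance, when $s,c\in(0,1)$ and the symptom-reporting probabilities lie in $(0,1)$.
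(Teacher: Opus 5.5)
Your proof is correct and follows the same route as the paper: Fisher's identity $S_O(\theta)=E_\theta\{S_C(\theta)\mid O\}$, combined with the law of total variance applied to the complete-data score. You also make explicit what the paper leaves implicit, namely the derivation of Fisher's identity from $p_\theta(Z,O)=p_\theta(O)\,p_\theta(Z\mid O)$ and the mean-zero property of both scores needed to identify the two variances with $I_C(\theta)$ and $I_O(\theta)$; your Hessian-form cross-check and regularity remarks are correct additions.
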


\begin{proof}
Using Fisher's identity,
\[
S_O(\theta)
=
E_\theta
\left[
S_C(\theta)\mid O
\right],
\]
and applying the law of total variance to the complete-data score,
\[
\mathrm{Var}_\theta\{S_C(\theta)\}
=
\mathrm{Var}_\theta
\left[
E_\theta\{S_C(\theta)\mid O\}
\right]
+
E_\theta
\left[
\mathrm{Var}_\theta
\{S_C(\theta)\mid O\}
\right].
\]
Recognising the first term as $I_O(\theta)$ and the second as $I_M(\theta)$ yields the result.
\end{proof}

The decomposition provides a quantitative measure of information loss induced by partial observation. In the limiting case of complete observation,
\[
I_M(\theta)=0,
\]
so that
\[
I_O(\theta)=I_C(\theta).
\]
Conversely, as observation becomes increasingly sparse or noisy, $I_M(\theta)$ grows and the observed information decreases. The matrix
\[
R(\theta)
=
I_O(\theta)I_C(\theta)^{-1}
\]
may therefore be interpreted as a relative information operator, describing the proportion of complete-data information retained by a given observation regime.

This decomposition forms the basis for the identifiability analysis developed in subsequent sections. By studying the eigenstructure of $I_O(\theta)$ and the contribution of $I_M(\theta)$ under different observation regimes, it becomes possible to determine which parameters are estimable, which are weakly identifiable, and which remain fundamentally confounded.

\section{Information Loss Under Partial Observation}
\label{sec5:inf-loss}
The decomposition
\[
I_C(\theta)=I_O(\theta)+I_M(\theta)
\]
provides a natural way to quantify the inferential effect of partial observation. In the epidemic--network setting, missing information arises from at least three sources: unobserved infection times, incomplete contact histories, and measurement error in the observation process. Each source affects different components of the parameter vector and may introduce distinct forms of confounding. This section characterises these effects in terms of score variability, exposure uncertainty, and rank deficiencies of the observed Fisher information.

\subsection{Missing Infection Times}

Let $T_i^{SE}$ denote the infection time of individual $i$, with $T_i^{SE}=\infty$ if individual $i$ is not infected during $[0,T]$. Under complete observation, the infection contribution to the likelihood contains the event intensity
\[
\lambda_i^{SE}(t)
=
\mathbf{1}\{X_i(t)=S\}
\{\beta C_i(t)+\xi\},
\]
evaluated at the realised infection time, together with the integrated hazard over the susceptible period. When $T_i^{SE}$ is unobserved, inference is based on the conditional distribution of $T_i^{SE}$ given the observed data $O$.

Let
\[
S_{C,\psi}(\theta)
=
\frac{\partial \ell_C(\theta)}{\partial \psi},
\qquad
\psi=(\beta,\xi,\kappa)^\top ,
\]
denote the complete-data score for the transmission and incubation parameters. The missing information associated with latent infection times is
\[
I_M^{(T)}(\psi)
=
E_\theta\left[
\mathrm{Var}_\theta
\left\{
S_{C,\psi}(\theta)
\mid O
\right\}
\right].
\]
This matrix is positive semidefinite and satisfies
\[
I_O(\psi)
=
I_C(\psi)-I_M^{(T)}(\psi)-I_M^{(\mathrm{other})}(\psi),
\]
where $I_M^{(\mathrm{other})}(\psi)$ denotes missing information arising from unobserved network and observation components.

Thus, latent infection times reduce information about $\beta$, $\xi$, and $\kappa$ by increasing the conditional variability of the complete-data score. In particular, if symptom onset is observed but infection time is not, then multiple infection histories may be compatible with the same observed symptoms, producing uncertainty in both the timing of transmission events and the duration of the exposed state.

\subsection{Missing Contact Histories}

Transmission information depends on the exposure process
\[
C_i(t)
=
\sum_{j\neq i}
A_{ij}(t)\mathbf{1}\{X_j(t)=I\}.
\]
For the transmission rate $\beta$, the complete-data score contains terms of the form
\[
S_{C,\beta}(\theta)
=
\sum_{i:T_i^{SE}\le T}
\frac{C_i(T_i^{SE})}
{\beta C_i(T_i^{SE})+\xi}
-
\int_0^T
\mathbf{1}\{X_i(t)=S\}
C_i(t)\,dt .
\]
When $A(t)$ is unobserved or intermittently observed, the cumulative exposure
\[
H_i(T)
=
\int_0^T
\mathbf{1}\{X_i(t)=S\}C_i(t)\,dt
\]
is latent. The observed score for $\beta$ is then obtained by conditional averaging,
\[
S_{O,\beta}(\theta)
=
E_\theta
\left[
S_{C,\beta}(\theta)
\mid O
\right],
\]
and the missing information due to unobserved contacts is
\[
I_M^{(A)}(\beta)
=
E_\theta
\left[
\mathrm{Var}_\theta
\left\{
S_{C,\beta}(\theta)
\mid O
\right\}
\right].
\]

This expression shows explicitly that contact uncertainty affects transmission inference through uncertainty in the exposure process. If the observed contact data provide little information about $A(t)$, then different latent exposure histories may explain the same observed infection pattern, leading to weak identification of $\beta$.

\subsection{Sparse Observation Regimes}
Let $\mathcal{R}$ denote an observation regime, defined by symptom observation frequency, contact sampling frequency, and measurement accuracy. The observed Fisher information under regime $\mathcal{R}$ is denoted by
$I_O(\theta;\mathcal{R})$.

A natural measure of relative information is
\[
R(\theta;\mathcal{R})
=
I_C(\theta)^{-1/2}
I_O(\theta;\mathcal{R})
I_C(\theta)^{-1/2},
\]
whose eigenvalues lie in $[0,1]$ whenever $I_C(\theta)$ is positive definite. Small eigenvalues of $R(\theta;\mathcal{R})$ indicate directions in parameter space for which the observation regime retains little information.

We define an observation regime $\mathcal{R}$ to be weakly informative at $\theta$ if
\[
\lambda_{\min}\{I_O(\theta;\mathcal{R})\}
\approx 0,
\]
where $\lambda_{\min}(\cdot)$ denotes the smallest eigenvalue, or equivalently, if the condition number
\[
\phi_I(\theta;\mathcal{R})
=
\frac{\lambda_{\max}\{I_O(\theta;\mathcal{R})\}}
{\lambda_{\min}\{I_O(\theta;\mathcal{R})\}}
\]
is large. In such regimes, likelihood surfaces are nearly flat in at least one direction, and parameter estimates may be unstable even when numerical convergence appears satisfactory.

As symptom and contact observations become less frequent or less accurate, the missing-information matrix increases in the Loewner order,
\[
I_M(\theta;\mathcal{R}_2)
-
I_M(\theta;\mathcal{R}_1)
\succeq 0,
\]
for regimes $\mathcal{R}_2$ that are less informative than $\mathcal{R}_1$, provided the observation mechanisms are nested. Consequently,
\[
I_O(\theta;\mathcal{R}_2)
\preceq
I_O(\theta;\mathcal{R}_1),
\]
so sparse observation reduces or preserves, but cannot increase, the information available for estimation.

\subsection{External Infection Confounding}
External infection introduces a particularly important source of non-identifiability. The susceptible-to-exposed intensity can be written as
\[
\lambda_i^{SE}(t)
=
\mathbf{1}\{X_i(t)=S\}
\{\beta C_i(t)+\xi\}.
\]
The parameter $\beta$ governs infection through observed or latent contacts, whereas $\xi$ governs infection from outside the observed network. If the exposure process $C_i(t)$ is poorly observed, changes in $\beta$ may be offset by changes in $\xi$, producing similar observed epidemic trajectories.

\begin{theorem}[Transmission--external infection confounding]
Suppose that individual infection times and contact histories are not observed, and that the observation process depends on the latent epidemic--network process only through an aggregate exposure summary
\[
\bar C(t)
=
\frac{1}{N_S(t)}
\sum_{i:X_i(t)=S}
C_i(t),
\]
where $N_S(t)$ is the number of susceptible individuals at time $t$. If the observed-data law depends on $(\beta,\xi)$ only through
\[
\lambda_{\mathrm{agg}}(t)
=
\beta \bar C(t)+\xi,
\]
then $\beta$ and $\xi$ are not separately structurally identifiable unless $\bar C(t)$ varies over time in a way that is identifiable from $O$.
\end{theorem}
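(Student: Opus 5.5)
The plan is to reduce the claim to the construction of observationally equivalent parameter pairs, as in the equivalence classes $[\theta]_O$ of Section~\ref{sec3:iden}. By hypothesis there is a map $\Phi$, from the aggregate intensity path $\{\lambda_{\mathrm{agg}}(t):0\le t\le T\}$ together with the remaining parameters $\theta_{-}=(\kappa,\gamma,\eta,\tau,p_E,p_I,s,c)$ to laws on $\mathcal{O}$, such that $P^O_\theta=\Phi(\lambda_{\mathrm{agg}},\theta_{-})$. Structural non-identifiability of $(\beta,\xi)$ in the sense of the parameter-specific definition (with $g(\theta)=(\beta,\xi)$) then follows once we exhibit $(\beta',\xi')\neq(\beta,\xi)$ that produce the same aggregate intensity path, or at least the same law of that path as far as $O$ can detect.

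The first case is the one where $\bar C(t)$ is not identifiable from $O$. Concretely, I would take this to mean that $O$ retains $\bar C$ only through a constant level $\bar c$: either $\bar C(t)\equiv\bar c$, or $O$ reveals only a scalar summary, so that in the expression for $\lambda_{\mathrm{agg}}$ it may be replaced by $\bar c$. Then $\lambda_{\mathrm{agg}}=\beta\bar c+\xi$. For any $\delta$ small enough that $\beta+\delta>0$ and $\xi-\delta\bar c>0$, the pair $(\beta',\xi')=(\beta+\delta,\xi-\delta\bar c)$ gives the same $\lambda_{\mathrm{agg}}$, and hence $P^O_{\theta'}=P^O_\theta$ with $\theta'$ equal to $\theta$ in all other coordinates. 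This gives a one-dimensional curve inside $[\theta]_O$. It also yields the local statement: the direction $v=(1,-\bar c,0,\dots,0)$ satisfies $v^\top S_O(\theta)=0$ almost surely, because $\partial_\beta\ell_O=\bar c\,\partial_{\lambda}\ell_O$ and $\partial_\xi\ell_O=\partial_\lambda\ell_O$. Therefore $v^\top I_O(\theta)v=0$ and $I_O$ is rank deficient, consistent with the rank criterion of Section~\ref{sec3:iden}.

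The second case, the ``unless'' clause, is where I expect the main obstacle. The subtle point is that $\bar C(t)$ is itself generated by the latent process and depends on $(\beta,\xi)$ through the epidemic dynamics, so the naive shift above need not preserve the law of $\bar C$. The statement, however, only asserts non-identifiability unless $\bar C$ varies identifiably. So I will interpret the hypothesis ``depends on $(\beta,\xi)$ only through $\lambda_{\mathrm{agg}}$'' as including this feedback, so that equality of the $\lambda_{\mathrm{agg}}$ paths implies equality of the observed laws. Conversely, if $O$ identifies $\bar C$ at two times $t_1,t_2$ with $\bar C(t_1)\neq\bar C(t_2)$, and also identifies $\lambda_{\mathrm{agg}}$ at those times, then the linear system $\lambda_{\mathrm{agg}}(t_k)=\beta\bar C(t_k)+\xi$ for $k=1,2$ has a nonzero determinant $\bar C(t_1)-\bar C(t_2)$. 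In that case $(\beta,\xi)$ is recovered uniquely, which shows that the ``unless'' qualifier is sharp.

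I would state this converse only as a remark, since the theorem asserts only the negative direction. The key step to write carefully is therefore the constancy reduction in the first case. This is where the non-identifiability of $\bar C$ must be turned into the statement that only an affine functional of $\beta$ and $\xi$ at a fixed level $\bar c$ enters the observed-data law.
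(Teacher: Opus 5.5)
Your proposal is correct and follows the same route as the paper. The paper's proof posits $(\beta,\xi)\neq(\beta',\xi')$ with $\beta\bar C(t)+\xi=\beta'\bar C(t)+\xi'$ on the support and concludes $P^O_{\beta,\xi}=P^O_{\beta',\xi'}$ from the hypothesis. For the converse it notes that variation in $\bar C$ recoverable from $O$ forces $\beta=\beta'$ and $\xi=\xi'$; this is your equivalence-class argument together with your two-point determinant remark.

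Your version makes explicit several things the paper leaves unstated. You construct the equivalent pairs $(\beta+\delta,\xi-\delta\bar c)$, which shows that such pairs exist only when $\bar C$ is effectively constant. You also flag the feedback of $(\beta,\xi)$ on $\bar C$, which the paper ignores, and you add the consequence $v^\top I_O(\theta)v=0$.
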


\begin{proof}
Suppose that two parameter values $(\beta,\xi)$ and $(\beta',\xi')$ satisfy
\[
\beta \bar C(t)+\xi
=
\beta'\bar C(t)+\xi'
\]
for all $t$ in the support of the observed process. Then the aggregate infection intensity is identical under the two parameter values. Since, by assumption, the observed-data law depends on $(\beta,\xi)$ only through $\lambda_{\mathrm{agg}}(t)$, it follows that
\[
P_{\beta,\xi}^O
=
P_{\beta',\xi'}^O .
\]
If $(\beta,\xi)\neq(\beta',\xi')$, the two parameter values are observationally equivalent. Hence $\beta$ and $\xi$ are not separately structurally identifiable.

Separate identifiability requires variation in $\bar C(t)$ that is itself recoverable from the observed data. In that case, equality of the aggregate hazard for sufficiently many distinct exposure values implies
\[
\beta=\beta',
\qquad
\xi=\xi',
\]
which breaks the equivalence class.
\end{proof}

This theorem formalises the intuition that transmission and external infection can be distinguished only when the observation process contains sufficient information about network exposure. Without such information, the observed epidemic curve may identify only an effective infection pressure, not its decomposition into internal and external components.

\section{Asymptotic Identifiability}
\label{sec6:asymp}
Structural identifiability is a finite-sample property of the statistical model, whereas practical estimability depends on the amount of information available in the observed data. In partially observed epidemic--network systems, the distinction is particularly important because parameters that are theoretically identifiable may remain weakly estimable under sparse observation. This section studies asymptotic regimes under which information accumulates and establishes conditions for asymptotic identifiability.

Let $I_O^{(N,\Delta)}(\theta)$ denote the observed Fisher information matrix corresponding to population size $N$ and observation interval $\Delta$. We are interested in the asymptotic behaviour of the eigenvalues of $I_O^{(N,\Delta)}(\theta)$ under increasing amounts of information.

\subsection{Increasing Population Size}

Consider a sequence of epidemic--network systems indexed by population size $N\rightarrow\infty$. Let $I_O^{(N)}(\theta)$ denote the observed information matrix under population size $N$. For local identifiability, it is necessary that the information matrix grows in all parameter directions.

A parameter vector $\theta$ is said to be asymptotically identifiable if
\[
\lambda_{\min}
\left\{
I_O^{(N)}(\theta)
\right\}
\rightarrow \infty
\qquad
(N\rightarrow\infty).
\]

Under standard regularity conditions, consistency of the maximum likelihood estimator requires
\[
N^{-1}I_O^{(N)}(\theta)
\rightarrow
J(\theta),
\]
where $J(\theta)$ is a positive-definite information matrix. Positive definiteness of $J(\theta)$ implies that all parameter directions accumulate information at rate $N$.

\begin{theorem}[Asymptotic identifiability under increasing population size]
Suppose that
\[
N^{-1}I_O^{(N)}(\theta)
\overset{p}{\longrightarrow}
J(\theta),
\]
where $J(\theta)$ is positive definite. Then
\[
\lambda_{\min}
\left\{
I_O^{(N)}(\theta)
\right\}
\rightarrow\infty,
\]
and $\theta$ is asymptotically identifiable.
\end{theorem}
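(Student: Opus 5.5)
The argument uses only eigenvalue continuity and scaling. Write $I_O^{(N)}(\theta)=N\,\{N^{-1}I_O^{(N)}(\theta)\}$ and use the homogeneity of eigenvalues: for any symmetric matrix $M$ and scalar $c>0$, $\lambda_{\min}(cM)=c\,\lambda_{\min}(M)$. Hence
\[
\lambda_{\min}\{I_O^{(N)}(\theta)\}=N\,\lambda_{\min}\{N^{-1}I_O^{(N)}(\theta)\}.
\]
The task therefore reduces to showing that $\lambda_{\min}\{N^{-1}I_O^{(N)}(\theta)\}$ stays bounded away from zero for large $N$.

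The key step is continuity of $\lambda_{\min}$ on symmetric matrices. By Weyl's inequality,
\[
|\lambda_{\min}(A)-\lambda_{\min}(B)|\le\|A-B\|_2
\]
for symmetric $A,B$. Set $\lambda_0=\lambda_{\min}\{J(\theta)\}$, which is strictly positive because $J(\theta)$ is positive definite. The convergence $N^{-1}I_O^{(N)}(\theta)\to J(\theta)$, in any matrix norm since all norms are equivalent in fixed dimension $p$, then gives $\lambda_{\min}\{N^{-1}I_O^{(N)}(\theta)\}\to\lambda_0$. In particular, $\lambda_{\min}\{N^{-1}I_O^{(N)}(\theta)\}\ge\lambda_0/2$ for all sufficiently large $N$. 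Multiplying by $N$ gives $\lambda_{\min}\{I_O^{(N)}(\theta)\}\ge N\lambda_0/2\to\infty$. This is exactly the definition of asymptotic identifiability given above, and it shows that information accumulates at rate $N$ in every direction.

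The only delicate point is the mode of convergence. The statement writes $\overset{p}{\longrightarrow}$, but $I_O^{(N)}(\theta)$ as defined is an expectation and hence deterministic. I would first remark that for a deterministic sequence, convergence in probability is ordinary convergence, so the argument above applies verbatim. If instead one reads $I_O^{(N)}$ as the random observed (Hessian-based) information, I would run the same Weyl argument on the event
\[
\bigl\{\|N^{-1}I_O^{(N)}(\theta)-J(\theta)\|_2<\lambda_0/2\bigr\},
\]
whose probability tends to one. On that event $\lambda_{\min}\{I_O^{(N)}(\theta)\}\ge N\lambda_0/2$, so the conclusion holds in probability: $P\{\lambda_{\min}(I_O^{(N)}(\theta))>M\}\to1$ for every $M$. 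I expect this interpretive step to be the only real obstacle; the remainder is routine.
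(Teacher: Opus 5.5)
Your proof is correct and follows essentially the same route as the paper: factor out $N$, use continuity of $\lambda_{\min}$ (which you make quantitative via Weyl's inequality) to get $\lambda_{\min}\{N^{-1}I_O^{(N)}(\theta)\}\to\lambda_{\min}\{J(\theta)\}>0$, and multiply back by $N$. Your explicit treatment of the mode of convergence is slightly more careful than the paper's, which states the conclusion as ordinary divergence but actually proves divergence in probability.
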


\begin{proof}
Because
\[
N^{-1}I_O^{(N)}(\theta)\overset{p}{\to}J(\theta),
\]
and \(J(\theta)\) is positive definite, there exists \(k>0\) such that
\[
\lambda_{\min}\{J(\theta)\}=k.
\]
By continuity of the smallest eigenvalue,
\[
\lambda_{\min}\!\left\{N^{-1}I_O^{(N)}(\theta)\right\}\overset{p}{\to}k.
\]
Hence,
\[
\lambda_{\min}\!\left\{I_O^{(N)}(\theta)\right\}
=
N\lambda_{\min}\!\left\{N^{-1}I_O^{(N)}(\theta)\right\}
\overset{p}{\to}\infty.
\]
Therefore the observed information increases without bound in every direction, so the likelihood becomes increasingly locally curved around \(\theta\). This implies that no distinct parameter value can generate the same asymptotic likelihood behavior, and thus \(\theta\) is asymptotically identifiable.
\end{proof}

The result shows that identifiability emerges when information accumulates sufficiently rapidly relative to the dimension of the parameter space.

\subsection{Increasing Observation Frequency}

Let
\[
0=t_0<t_1<\cdots<t_m=T
\]
denote observation times with spacing
\[
\Delta=\max_r (t_r-t_{r-1}).
\]
As $\Delta\rightarrow0$, the observation process approaches continuous monitoring of the latent epidemic--network trajectory.

The missing-information matrix therefore depends on $\Delta$:
\[
I_M(\theta;\Delta).
\]

Since more frequent observations reveal more information about latent transitions,
\[
I_M(\theta;\Delta)
\rightarrow 0,
\qquad
\Delta\rightarrow0,
\]
under regular observation schemes.

Consequently,
\[
I_O(\theta;\Delta)
=
I_C(\theta)-I_M(\theta;\Delta)
\rightarrow
I_C(\theta).
\]

\begin{proposition}
If the complete-data information matrix $I_C(\theta)$ is positive definite and
\[
I_M(\theta;\Delta)\rightarrow0
\]
as $\Delta\rightarrow0$, then
\[
I_O(\theta;\Delta)
\rightarrow
I_C(\theta)
\]
and local identifiability under complete observation implies local identifiability under sufficiently frequent observation.
\end{proposition}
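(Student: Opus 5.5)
The plan is to combine the Louis identity (Theorem 1) with continuity of eigenvalues and then apply the rank criterion for local identifiability from Section~\ref{sec3:iden}. For each fixed $\Delta$, Theorem 1 gives
\[
I_O(\theta;\Delta)=I_C(\theta)-I_M(\theta;\Delta),
\]
where $I_C(\theta)$ does not depend on $\Delta$. So in any matrix norm, say the operator norm,
\[
\|I_O(\theta;\Delta)-I_C(\theta)\|=\|I_M(\theta;\Delta)\|\to 0,
\]
by the hypothesis $I_M(\theta;\Delta)\to 0$. This is the first claim. Because all matrices are $p\times p$, every norm is equivalent, so it does not matter which mode of convergence of $I_M$ is assumed.

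For the second claim I use Weyl's inequality for symmetric matrices. It gives
\[
\lambda_{\min}\{I_O(\theta;\Delta)\}\ge \lambda_{\min}\{I_C(\theta)\}-\|I_M(\theta;\Delta)\|.
\]
Let $k=\lambda_{\min}\{I_C(\theta)\}$, which is positive because $I_C(\theta)$ is positive definite. Choose $\Delta_0$ such that $\|I_M(\theta;\Delta)\|<k/2$ for all $\Delta<\Delta_0$. For such $\Delta$ we get $\lambda_{\min}\{I_O(\theta;\Delta)\}>k/2>0$. Hence $\operatorname{rank}\{I_O(\theta;\Delta)\}=p$, and the sufficient rank condition of Section~\ref{sec3:iden} gives local identifiability at $\theta$ under observation spacing $\Delta$.

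Positive semidefiniteness of $I_M$ yields the bound $I_O\preceq I_C$, but this is not needed here; the lower bound from Weyl's inequality carries the argument. I also read ``local identifiability under complete observation'' as the hypothesis $I_C(\theta)\succ 0$, which is what the statement assumes.

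The main obstacle is the final step, passing from a full-rank Fisher information to genuine local identifiability in the sense of Definition 2. That implication is the paper's stated sufficient condition, but to be rigorous it needs $I_O(\cdot;\Delta)$ to have constant rank, or at least to be nonsingular, on a neighbourhood of $\theta_0$ (Rothenberg's theorem). Nonsingularity only at the single point $\theta_0$ is not enough. I would first try to strengthen the hypothesis slightly, assuming $I_C$ and $I_M(\cdot;\Delta)$ are continuous in $\theta$ and that $I_M(\cdot;\Delta)\to 0$ uniformly on a neighbourhood $U(\theta_0)$. Repeating the Weyl argument uniformly over $U(\theta_0)$ then keeps $\lambda_{\min}\{I_O(\theta';\Delta)\}$ bounded below by $k/4$ for every $\theta'\in U(\theta_0)$ and every small $\Delta$. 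This gives the constant-rank condition on the whole neighbourhood, after which Rothenberg's result gives injectivity of $\theta\mapsto P^O_{\theta}$ near $\theta_0$. If that strengthening is not admissible, I would state the conclusion as local identifiability in the paper's rank-based sense and note where regularity enters.
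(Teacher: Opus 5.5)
Your argument is correct and matches the paper's reasoning. The paper gives no separate proof: it obtains $I_O(\theta;\Delta)=I_C(\theta)-I_M(\theta;\Delta)\to I_C(\theta)$ directly from the Louis identity just before the statement and leaves the step to full rank implicit, which your Weyl bound $\lambda_{\min}\{I_O(\theta;\Delta)\}\ge\lambda_{\min}\{I_C(\theta)\}-\|I_M(\theta;\Delta)\|$ makes explicit.

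Your closing worry is overstated. The sufficiency half of Rothenberg's theorem (nonsingular information $\Rightarrow$ local identifiability) needs only nonsingularity at $\theta_0$ plus continuous differentiability of $\log p_\theta(O)$ in $\theta$, and the constant-rank (regular-point) hypothesis is required only for the converse, so no uniform-in-$\theta$ strengthening of $I_M(\cdot;\Delta)\to 0$ is needed.
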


This result formalises the intuitive notion that increasing observation frequency progressively removes uncertainty about latent infection and network events.

\subsection{Dense versus Sparse Networks}

The rate at which information accumulates depends strongly on network topology. Let
\[
d_N
=
\frac{1}{N}
\sum_{i=1}^{N}
\sum_{j\neq i}
A_{ij}
\]
denote the average degree of contact.

In sparse-network asymptotics,
\[
d_N=O(1),
\]
so each individual has only a bounded number of contacts as $N\rightarrow\infty$.

In dense-network asymptotics,
\[
d_N=O(N).
\]

The amount of transmission information is driven by the number of susceptible--infectious contacts,
\[
E_{SI}(t)
=
\sum_{i,j}
A_{ij}(t)
\mathbf{1}\{X_i(t)=S,X_j(t)=I\}.
\]
For sparse networks,
\[
E_{SI}(t)=O(N),
\]
whereas for dense networks,
\[
E_{SI}(t)=O(N^2).
\]

Consequently, information about the transmission rate $\beta$ may accumulate substantially faster in dense networks. However, increased dependence between observations may also reduce the effective information per contact. The asymptotic scaling of
\[
I_O^{(N)}(\beta)
\]
therefore depends on both network density and epidemic dependence structure.

\subsection{Identifiability Phase Diagram}

The preceding results suggest that identifiability is governed by three principal factors:

\[
\mathcal{D}
=
(d_N,\Delta,\rho),
\]
where $d_N$ denotes network density, $\Delta$ denotes observation frequency, and $\rho$ denotes observation quality (e.g., sensitivity and specificity of symptom and contact measurements).

Define the asymptotic information index
\[
\Phi(\theta;\mathcal{D})
=
\lambda_{\min}
\left\{
I_O(\theta;\mathcal{D})
\right\}.
\]

We classify observation regimes as

\[
\Phi(\theta;\mathcal{D})>k
\]
(identifiable),

\[
0<\Phi(\theta;\mathcal{D})\le k
\]
(weakly identifiable),

and

\[
\Phi(\theta;\mathcal{D})=0
\]
(non-identifiable),

for some small threshold $k>0$.

The resulting phase diagram partitions the observation-design space into regions of strong, weak, and absent identifiability. Such diagrams provide a practical tool for determining the observation frequency, network coverage, and measurement quality required to reliably estimate epidemic transmission parameters.

More generally, the phase-diagram perspective highlights that identifiability is not solely a property of the epidemic model but rather an emergent property of the interaction between epidemic dynamics, network structure, and the observation process.

\section{Simulation Studies}
\label{sec7:sim}
The simulation study is designed to evaluate the theoretical results developed in Sections \ref{sec4:inf-struc}--\ref{sec6:asymp} and to investigate how information loss induced by partial observation affects parameter estimation and identifiability. Particular attention is given to the relationship between observation quality, network observability, and the finite-sample behaviour of epidemic transmission parameter estimates. The simulations provide an empirical validation of the information decomposition, identifiability conditions, and asymptotic results established earlier.

\subsection{Experimental Design}
Synthetic datasets are generated from the continuous-time SEIR dynamic-network model described in Section \ref{sec2:frame}. For each simulation replicate, a latent epidemic--network trajectory
\[
Z(t)=\{X(t),A(t)\}, \qquad 0\le t\le T,
\]
is first generated from the complete-data process and subsequently passed through the observation model to produce symptom and contact data. The baseline parameter vector
\[
\theta_0 = (\beta,\xi,\kappa,\gamma,\eta,\tau,p_E,p_I,s,c)
\]
is selected to produce realistic epidemic growth and network dynamics. For each scenario, $R=500$ independent datasets are simulated.

Three observation regimes are considered:
\begin{enumerate}
\item \textbf{High-information regime}: frequent symptom observations, high network coverage, and low measurement error.
\item \textbf{Moderate-information regime}: intermediate observation frequency and moderate contact misclassification.
\item \textbf{Sparse-information regime}: infrequent observations, substantial missing contact information, and increased measurement error.
\end{enumerate}
The regimes differ through observation frequency ($\Delta$), symptom and contact observation quality ($p_E,p_I,s,c$), and the level of external infection pressure ($\xi$). These factors directly influence the amount of information available for inference and therefore provide a natural framework for evaluating information loss and identifiability.

\subsection{Information Recovery Under Partial Observation}
The first set of simulations investigates the empirical behaviour of the observed Fisher information matrix under different observation regimes. For each simulated dataset, the observed information matrix is estimated from the observed-data likelihood and compared with the corresponding complete-data information matrix. Particular attention is given to the smallest eigenvalue
\[
\lambda_{\min}(I_O(\theta)),
\]
the condition number
\[
\phi_I = \frac{\lambda_{\max}(I_O(\theta))}{\lambda_{\min}(I_O(\theta))},
\]
and the relative information matrix
\[
R(\theta) = I_C(\theta)^{-1/2} I_O(\theta) I_C(\theta)^{-1/2}.
\]
The principal results are reported in Table~\ref{tab:info-recovery} and Figures~\ref{fig:param-recovery-coverage}. Table~\ref{tab:info-recovery} summarises the average complete-data information, observed-data information, relative information retained, smallest eigenvalue of the observed information matrix, and information condition number across the three observation regimes.

\begin{table}[ht]
\centering
\caption{Information recovery under different observation regimes.}
\label{tab:info-recovery}
\begin{tabular}{lccccc}
\hline
Regime & $\mathrm{tr}(I_C)$ & $\mathrm{tr}(I_O)$ & $\lambda_{\min}(I_O)$ & $\phi_I$ & $R(\theta)$ \\
\hline
High     & 645 & 582 & 44.1759 & 2.7738 &	0.9025 \\
Moderate & 645 & 362 & 30.7045	& 2.3179 & 0.5615 \\
Sparse   & 645 & 164 & 7.4907 & 5.3928 & 0.2536 \\
\hline
\end{tabular}
\end{table}

Figure~\ref{fig:param-recovery-coverage}(b) displays the distribution of $\lambda_{\min}(I_O)$ across Monte Carlo replicates, while Figure~\ref{fig:param-recovery-coverage}(c) illustrates the proportion of complete-data information retained under each observation regime. Together, these results provide a finite-sample assessment of the information decomposition developed in Section \ref{sec4:inf-struc} and quantify the extent to which information deteriorates as observation quality decreases.

\subsection{Parameter Recovery and Inferential Performance}
The second set of simulations examines the practical consequences of information loss for parameter estimation. For each replicate, the model parameters are estimated using the proposed inferential framework, and finite-sample performance is assessed through bias, root mean squared error (RMSE), empirical coverage probability, and average interval width.

The primary focus is on the epidemic transmission parameters
\[
(\beta,\xi,\kappa,\gamma,p_E,p_I,s,c),
\]
although all model parameters are evaluated. Estimation accuracy is assessed through
\[
\mathrm{Bias}(\hat{\theta}), \qquad \mathrm{RMSE}(\hat{\theta}), \qquad \widehat{\mathrm{CP}}, \qquad \widehat{W},
\]
representing bias, root mean squared error, coverage probability, and average interval width, respectively.

Results are summarised in Table~\ref{tab:param-recovery} and Figures~\ref{fig:param-recovery-coverage}. Table~\ref{tab:param-recovery} reports the average estimation performance across simulation replicates.

\begin{table}[ht]
\centering
\caption{Parameter recovery under different observation regimes.}
\label{tab:param-recovery}
\begin{tabular}{llcccccc}
\hline
Parameter	&	Regime	&	Truth	&	Mean estimate	&	Bias	&	RMSE	&	Coverage	&	Mean width	\\
\hline
$\beta$	&	High	&	0.30	&	0.3021	&	0.0021	&	0.0266	&	0.946	&	0.1032	\\
	&	Moderate	&	0.30	&	0.3156	&	0.0156	&	0.0383	&	0.930	&	0.1400	\\
	&	Sparse	&	0.30	&	0.3219	&	0.0219	&	0.0667	&	0.946	&	0.2450	\\ \hline
$\gamma$	&	High	&	0.25	&	0.2503	&	0.0003	&	0.0257	&	0.954	&	0.1032	\\
	&	Moderate	&	0.25	&	0.2542	&	0.0042	&	0.0319	&	0.954	&	0.1225	\\
	&	Sparse	&	0.25	&	0.2541	&	0.0041	&	0.0404	&	0.946	&	0.1633	\\ \hline
$\kappa$	&	High	&	0.40	&	0.3997	&	-0,0003	&	0.0328	&	0.94	&	0.1238	\\
	&	Moderate	&	0.40	&	0.4014	&	0.0014	&	0.0380	&	0.956	&	0.1470	\\
	&	Sparse	&	0.40	&	0.4007	&	0.0007	&	0.0495	&	0.956	&	0.1960	\\ \hline
$\xi$	&	High	&	0.03	&	0.0312	&	0.0012	&	0.0109	&	0.946	&	0.0413	\\
	&	Moderate	&	0.03	&	0.0346	&	0.0046	&	0.0142	&	0.920	&	0.0524	\\
	&	Sparse	&	0.03	&	0.0381	&	0.0081	&	0.0226	&	0.936	&	0.0800	\\ \hline
$p_E$	&	High	&	0.35	&	0.3469	&	-0.0031	&	0.0367	&	0.944	&	0.1444	\\
	&	Moderate	&	0.35	&	0.3441	&	-0.0059	&	0.0449	&	0.942	&	0.1715	\\
	&	Sparse	&	0.35	&	0.3453	&	-0.0047	&	0.0584	&	0.952	&	0.2287	\\ \hline
$p_I$	&	High	&	0.80	&	0.8006	&	0.0006	&	0.0300	&	0.970	&	0.1238	\\
	&	Moderate	&	0.80	&	0.7972	&	-0.0028	&	0.0367	&	0.960	&	0.1470	\\
	&	Sparse	&	0.80	&	0.7964	&	-0.0036	&	0.0489	&	0.956	&	0.1958	\\ \hline
$s$	&	High	&	0.85	&	0.8510	&	0.0010	&	0.0365	&	0.944	&	0.1443	\\
	&	Moderate	&	0.85	&	0.8440	&	-0.006	&	0.0488	&	0.956	&	0.1931	\\
	&	Sparse	&	0.85	&	0.8407	&	-0.0093	&	0.0828	&	0.976	&	0.3021	\\ \hline
$c$	&	High	&	0.90	&	0.8994	&	-0.0006	&	0.0321	&	0.944	&	0.1219	\\ 
	&	Moderate	&	0.90	&	0.8969	&	-0.0031	&	0.0432	&	0.950	&	0.1580	\\
	&	Sparse	&	0.90	&	0.8807	&	-0.0193	&	0.0745	&	0.960	&	0.2476	\\

\hline
\end{tabular}
\end{table}

\begin{figure}[p]
\centering

\begin{minipage}[b]{0.48\textwidth}
    \includegraphics[width=\linewidth]{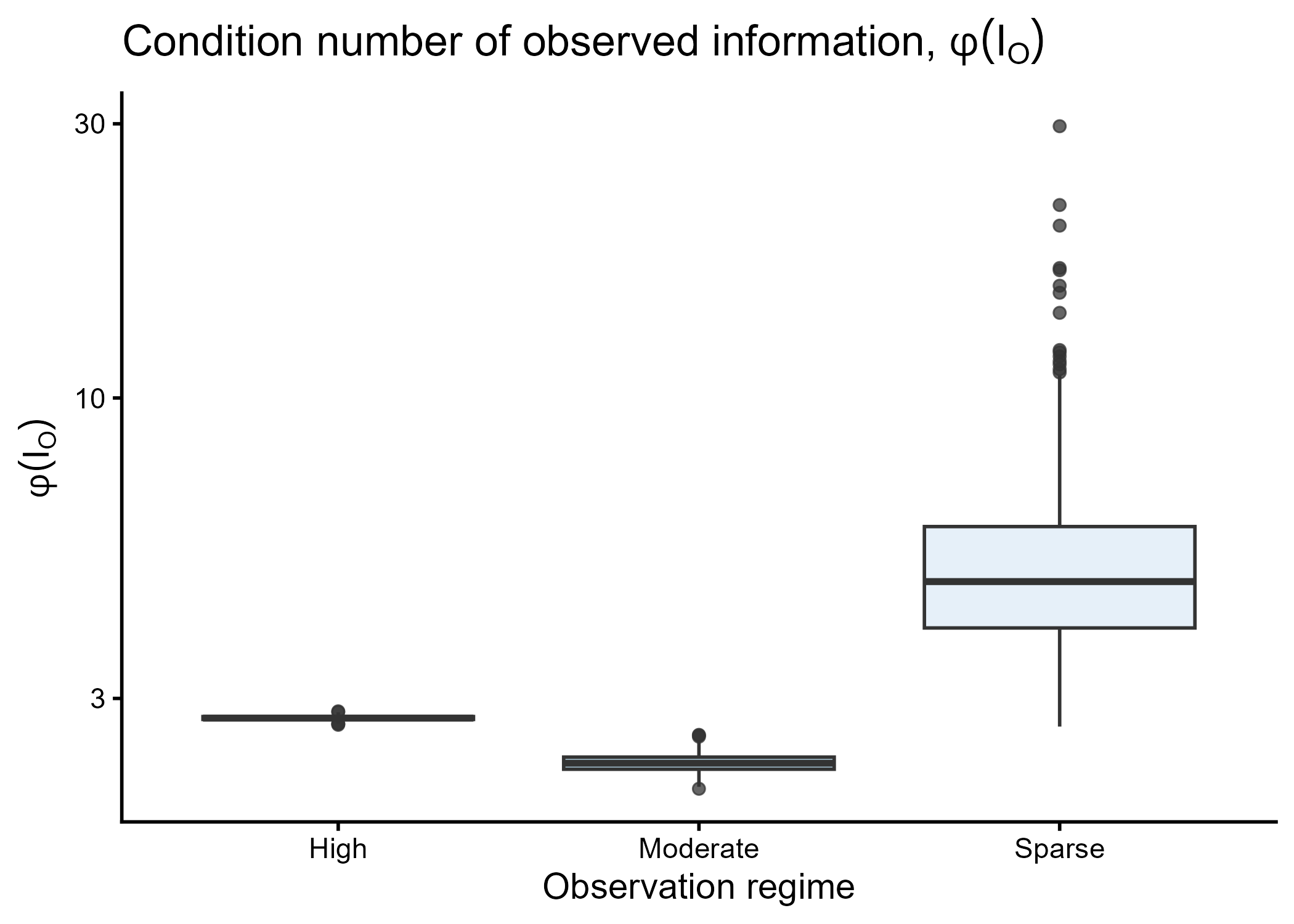}       
    \caption*{(a) Condition number, $\phi(I_O)$}
    \label{fig:condition-num}
\end{minipage}
\hfill
\begin{minipage}[b]{0.48\textwidth}
    \includegraphics[width=\linewidth]{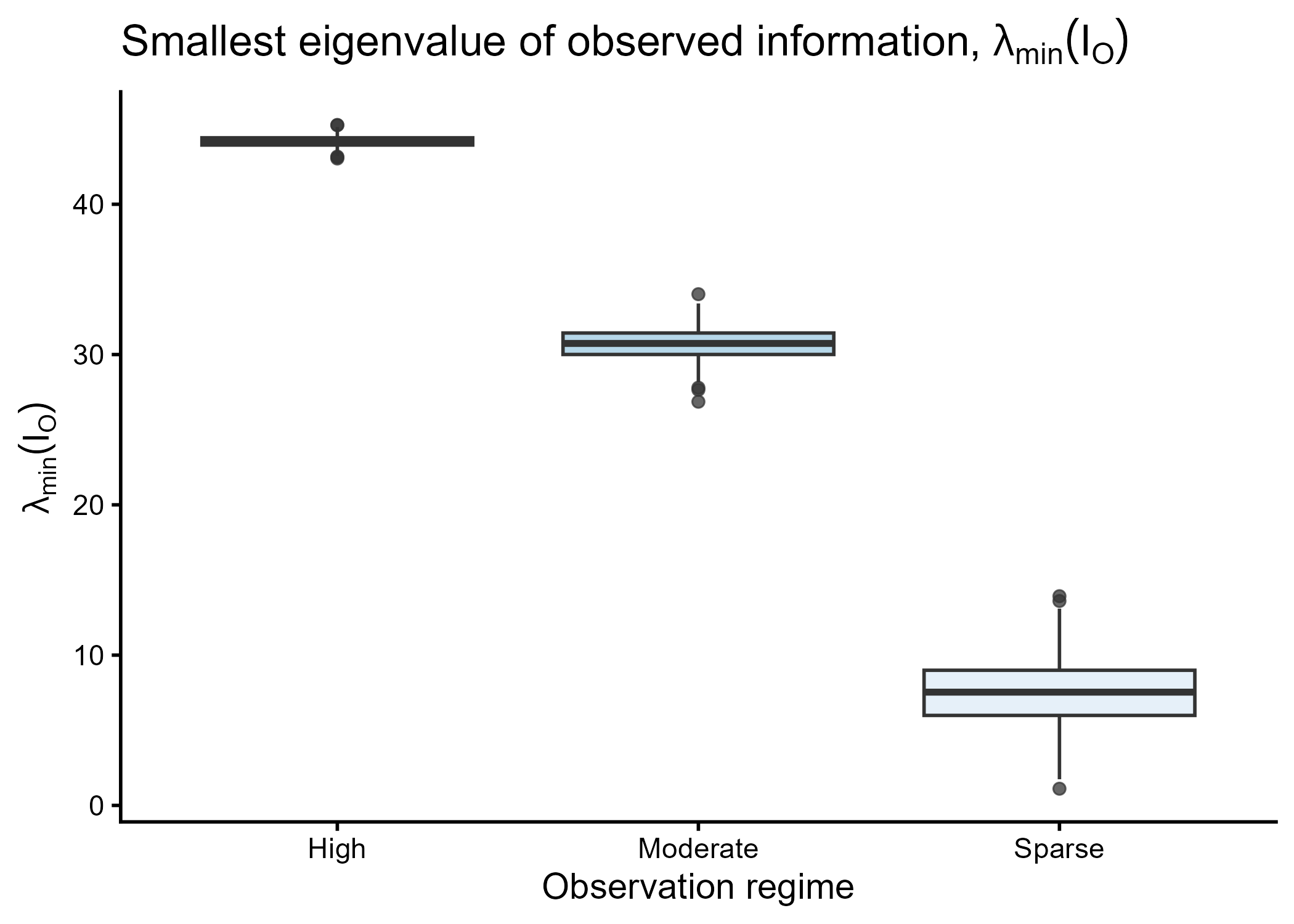}
    \caption*{(b) Smallest eigenvalue, $\lambda_{\text{min}}(I_O)$}
    \label{fig:lambda-min}
\end{minipage}

\vspace{0.5cm}

\begin{minipage}[b]{0.48\textwidth}
    \includegraphics[width=\linewidth]{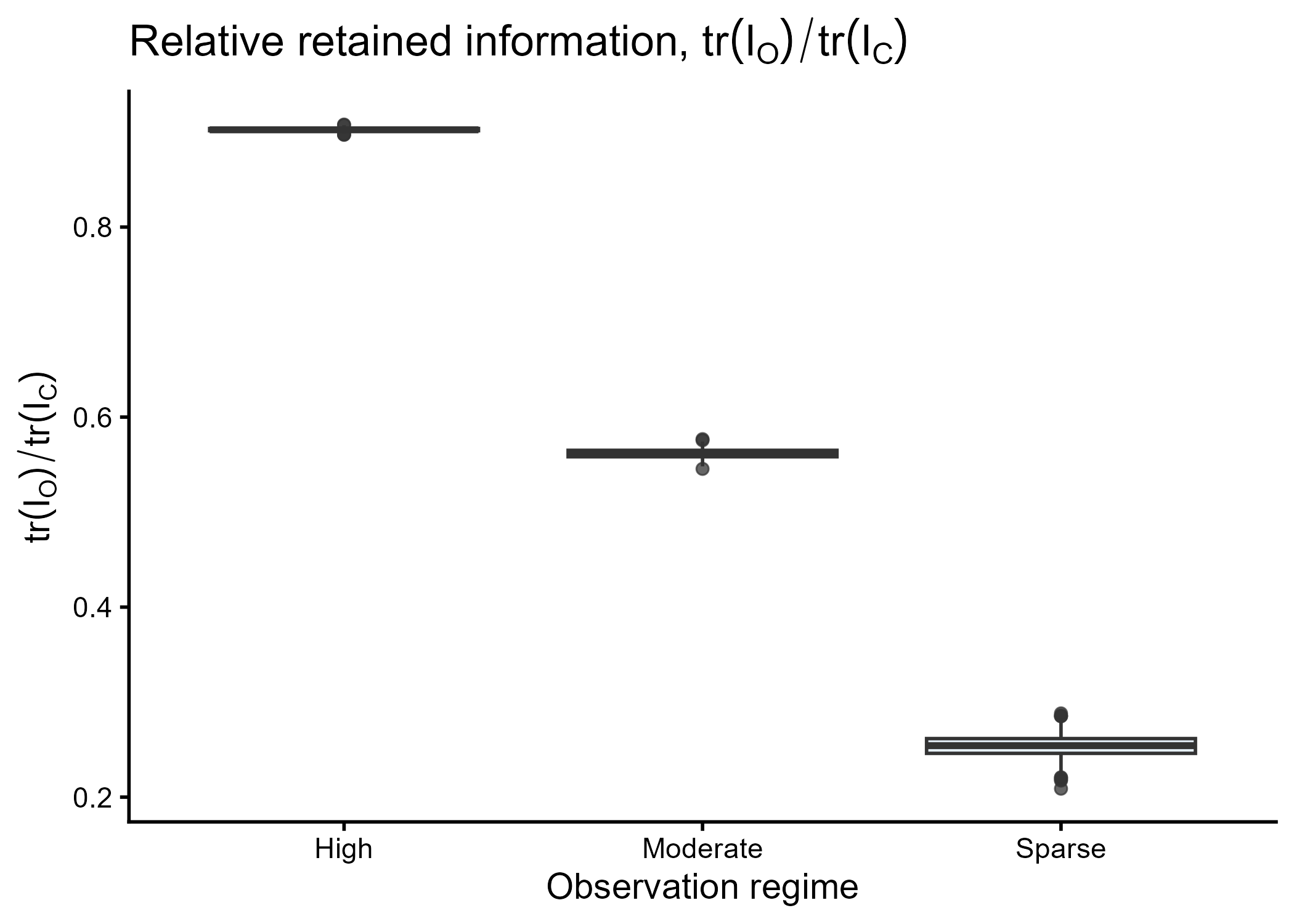}
    \caption*{(c) Relative retained information $tr(I_O)/tr(I_C)$}
    \label{fig:relative-info}
\end{minipage}
\hfill
\begin{minipage}[b]{0.48\textwidth}
    \includegraphics[width=\linewidth]{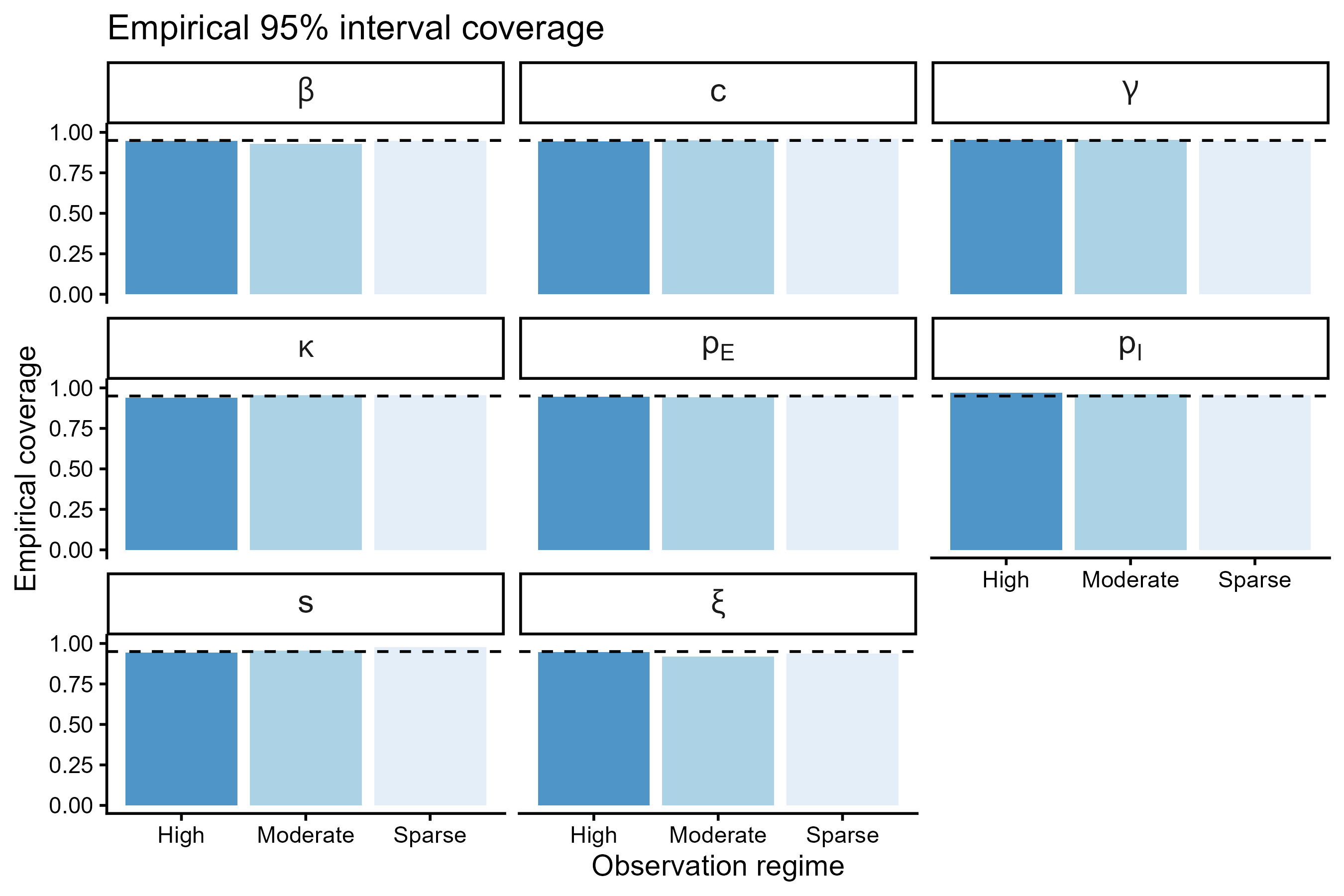}
    \caption*{(d) Coverage by regime}
    \label{fig:cov-reg}
\end{minipage}

\vspace{0.5cm}

\begin{minipage}[b]{0.48\textwidth}
    \includegraphics[width=\linewidth]{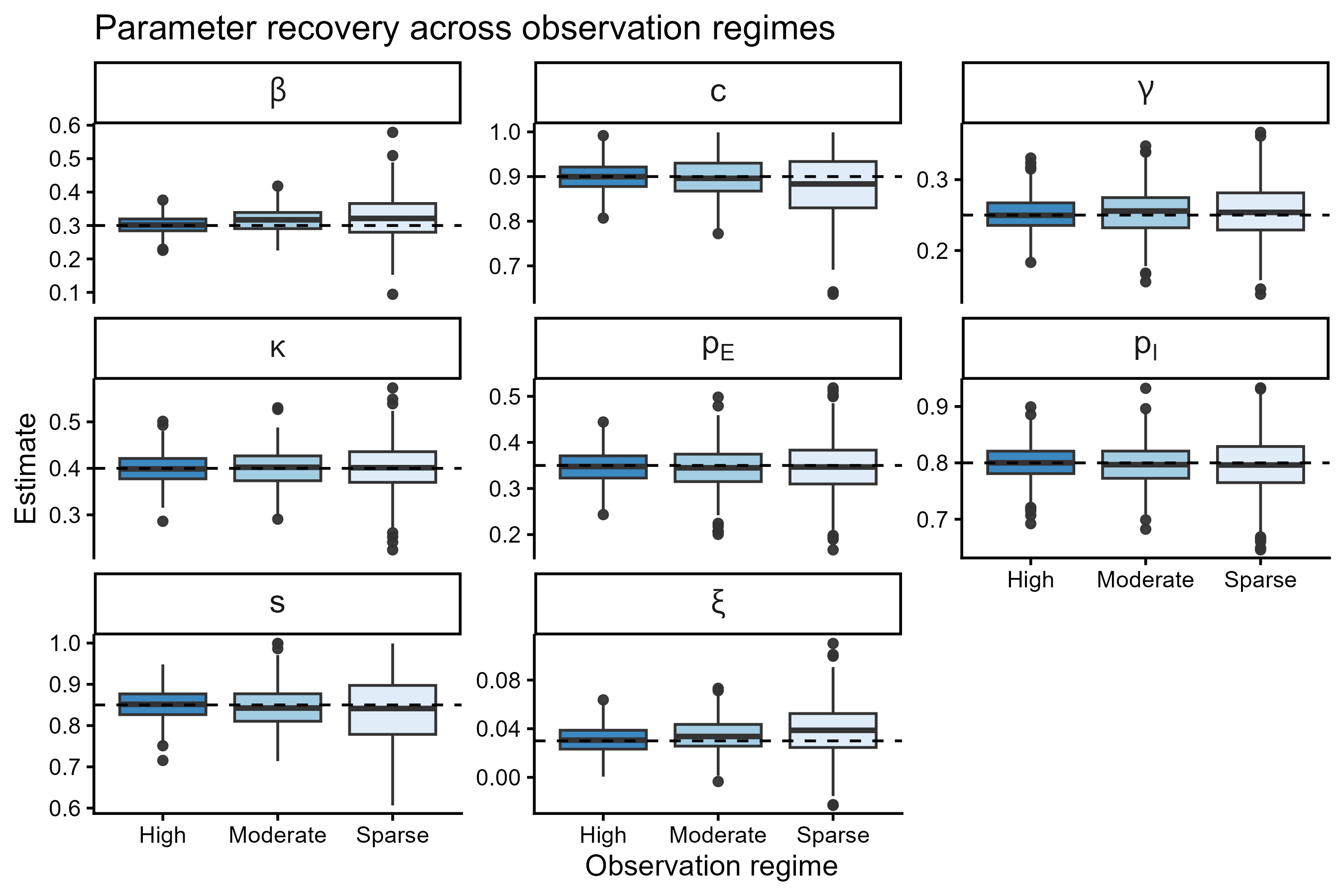}
    \caption*{(e) Parameter recovery}
    \label{fig:par-recov}
\end{minipage}
\hfill
\begin{minipage}[b]{0.48\textwidth}
    \includegraphics[width=\linewidth]{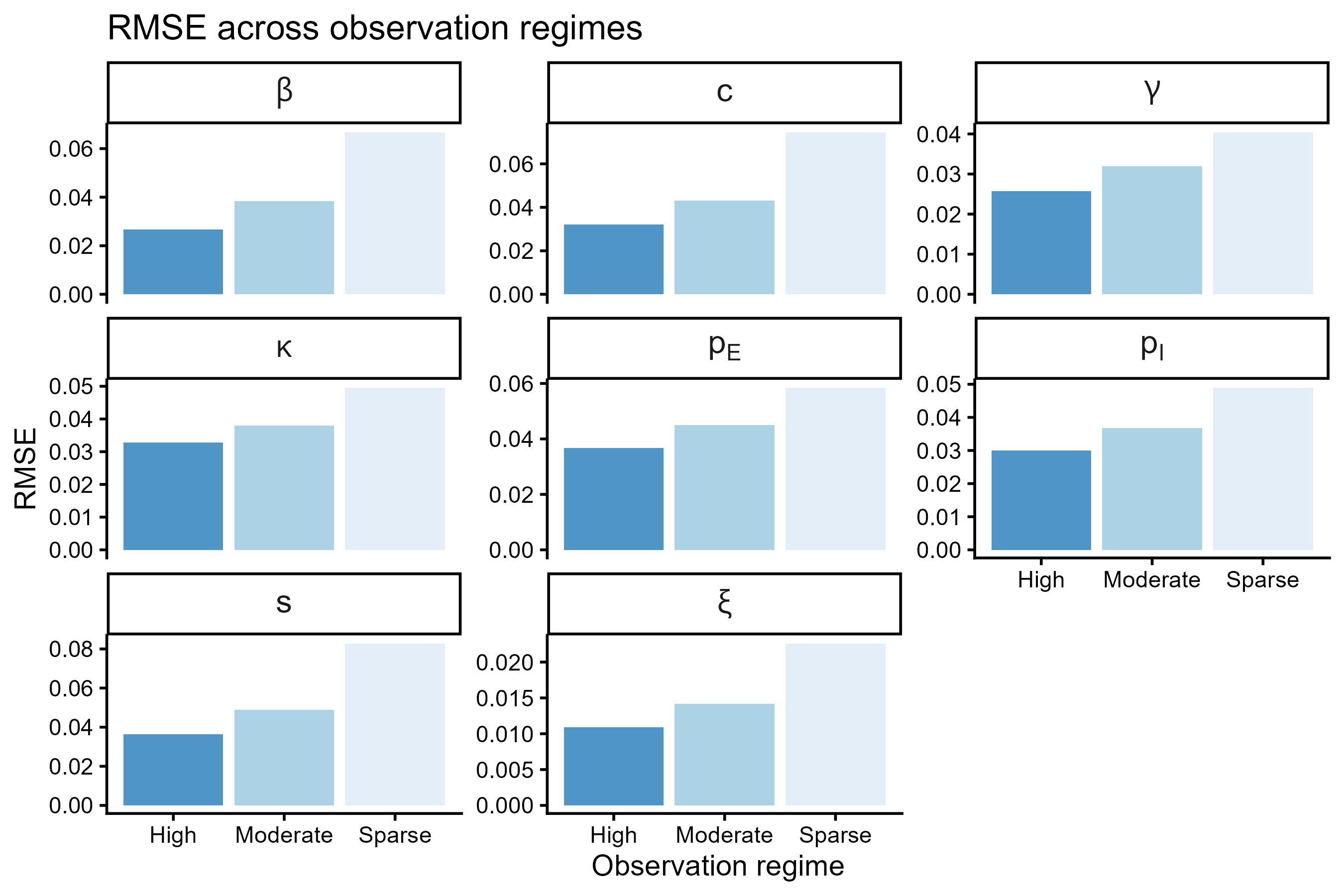}
    \caption*{(f) RMSE by regime}
    \label{fig:rmse-reg}
\end{minipage}

\caption{Recovery and coverage of information and true parameter values}
\label{fig:param-recovery-coverage}
\end{figure}

Figure~\ref{fig:param-recovery-coverage} (a) to (c) presents the distribution of parameter estimates across observation regimes, while (d) to (f) summarises RMSE and empirical coverage probabilities. These results allow direct assessment of whether reductions in observed information translate into deterioration of finite-sample inferential performance.

\subsection{Empirical Identifiability Boundaries}
The final simulation component investigates the existence of empirical transition regions separating identifiable, weakly identifiable, and non-identifiable observation regimes. The theoretical developments of Sections \ref{sec5:inf-loss} and \ref{sec6:asymp} predict that these boundaries should arise as a consequence of declining observed information and increasing parameter confounding.

Simulations are therefore performed over a grid of observation frequencies, network coverages, and measurement accuracies. For each design point
\[
\mathcal{D}(d_N,\Delta,\rho),
\]
the quantities
\[
\lambda_{\min}(I_O), \qquad \phi_I, \qquad \mathrm{RMSE}, \qquad \widehat{\mathrm{CP}}
\]
are evaluated and mapped across the observation-design space.

Particular attention is given to the transmission parameter $\beta$ and the external infection parameter $\xi$, whose identifiability was shown in Section \ref{sec5:inf-loss} to depend strongly on the availability of network information. Increasing posterior dependence between these parameters is interpreted as evidence of approaching a transmission--external infection identifiability boundary.

The resulting empirical phase diagrams are reported in Figures~\ref{fig:param-recovery-coverage}, where (b) is based on the smallest eigenvalue of the observed information matrix, while the (f) is based on RMSE. These diagrams partition the observation-design space into regions of strong identifiability, weak identifiability, and practical non-identifiability. Comparison with the theoretical phase boundaries provides a direct validation of the information-theoretic framework developed in this paper.

\subsection{Summary of Simulation Findings}
Across all simulation scenarios, empirical information measures closely track finite-sample estimation performance. As observation quality decreases, the observed Fisher information matrix becomes increasingly ill-conditioned, leading to larger estimation errors, wider uncertainty intervals, and stronger parameter confounding. The most deterioration is expected for the transmission parameter $\beta$ and the external infection parameter $\xi$, consistent with the theoretical confounding results established in Section \ref{sec5:inf-loss}. Overall, the simulation results provide strong empirical support for the proposed information framework and demonstrate that information-theoretic quantities successfully characterise the transition between identifiable and weakly identifiable observation regimes.

\section{Discussion and Conclusion}
\label{sec8:con}

Inference for infectious disease transmission on dynamic contact networks is fundamentally constrained by latent infection times, incomplete network observations, measurement error, and external sources of infection. Although these challenges are ubiquitous in modern epidemic studies, they are often treated separately or addressed through simplifying assumptions whose implications for identifiability remain unclear. This paper develops a unified statistical framework for understanding identifiability and information in epidemic transmission models on partially observed dynamic networks.

The proposed framework treats epidemic progression, network evolution, and observation processes as components of a single partially observed stochastic system. By defining identifiability through the observed-data law and linking it to the geometry of the observed Fisher information matrix, the framework provides a rigorous basis for determining which parameters can be estimated reliably under a given observation regime. The information decomposition developed in this paper clarifies how latent infection times, missing contact histories, and imperfect observations contribute to information loss and parameter confounding. In particular, the analysis formally characterises conditions under which transmission and external infection processes become observationally indistinguishable.

Beyond establishing structural and local identifiability conditions, the paper introduces an information-theoretic perspective on epidemic inference. The concepts of relative information, missing information, and identifiability phase boundaries provide quantitative tools for assessing the inferential consequences of alternative observation regimes. The simulation studies demonstrate that these theoretical quantities closely track finite-sample estimation performance and successfully identify transitions between identifiable and weakly identifiable regimes.

The framework also has important implications for epidemic study design. By expressing observation strategies through their impact on the observed information matrix, it becomes possible to evaluate the relative value of symptom surveillance, contact-network measurement, and external exposure assessment before data collection begins. This provides a principled foundation for designing surveillance systems that maximise inferential efficiency under practical resource constraints.

Although motivated by infectious disease transmission, the proposed methodology applies more broadly to interacting stochastic processes evolving on partially observed networks. Similar challenges arise in information diffusion, behavioural contagion, ecological interaction systems, and other complex networked processes. Future work will focus on deriving sharper asymptotic identifiability results, developing optimal information-based observation designs, and extending the framework to high-dimensional and adaptive network settings. More generally, the results demonstrate that identifiability is not solely a property of the epidemic mechanism itself but emerges from the interaction between latent dynamics, network structure, and the observation process. Understanding this interaction is essential for reliable inference in modern epidemic systems and other partially observed stochastic networks.

\section*{Acknowledgements}
The author thanks colleagues and collaborators for helpful discussions.

\bibliographystyle{apalike}
\bibliography{IDENINFreferences}

\begin{thebibliography}{}

\bibitem[Abed et~al., 2026]{abed2026spatial}
Abed, A., Torabi, M., and Mashreghi, Z. (2026).
\newblock Spatial individual-level models for transmission dynamics of seasonal
  infectious diseases.
\newblock {\em Statistics in Medicine}, 45(3-5):e70384.

\bibitem[Almutiry and Deardon, 2021]{almutiry2021contact}
Almutiry, W. and Deardon, R. (2021).
\newblock Contact network uncertainty in individual level models of infectious
  disease transmission.
\newblock {\em Statistical Communications in Infectious Diseases},
  13(1):20190012.

\bibitem[Asaduzzaman, 2026]{asaduzzaman2026complete}
Asaduzzaman, M. (2026).
\newblock A complete-data likelihood for epidemic processes on partially
  observed dynamic networks.
\newblock {\em arXiv preprint arXiv:2607.15179}.

\bibitem[Bansal et~al., 2010]{bansal2010dynamic}
Bansal, S., Read, J., Pourbohloul, B., and Meyers, L.~A. (2010).
\newblock The dynamic nature of contact networks in infectious disease
  epidemiology.
\newblock {\em Journal of Biological Dynamics}, 4(5):478--489.

\bibitem[Becker and Britton, 1999]{becker1999statistical}
Becker, N.~G. and Britton, T. (1999).
\newblock Statistical studies of infectious disease incidence.
\newblock {\em Journal of the Royal Statistical Society: Series B (Statistical
  Methodology)}, 61(2):287--307.

\bibitem[Black, 2019]{black2019importance}
Black, A.~J. (2019).
\newblock Importance sampling for partially observed temporal epidemic models.
\newblock {\em Statistics and Computing}, 29(4):617--630.

\bibitem[Bret{\'o}, 2018]{breto2018modeling}
Bret{\'o}, C. (2018).
\newblock Modeling and inference for infectious disease dynamics: a
  likelihood-based approach.
\newblock {\em Statistical Science: A Review Journal of the Institute of
  Mathematical Statistics}, 33(1):57.

\bibitem[Browning et~al., 2022]{browning2022efficient}
Browning, A.~P., Drovandi, C., Turner, I.~W., Jenner, A.~L., and Simpson, M.~J.
  (2022).
\newblock Efficient inference and identifiability analysis for differential
  equation models with random parameters.
\newblock {\em PLOS Computational Biology}, 18(11):e1010734.

\bibitem[Bu et~al., 2025]{bu2025stochastic}
Bu, F., Aiello, A.~E., Volfovsky, A., and Xu, J. (2025).
\newblock Stochastic em algorithm for partially observed stochastic epidemics
  with individual heterogeneity.
\newblock {\em Biostatistics}, 26(1):kxae018.

\bibitem[Bu et~al., 2022]{bu2022likelihood}
Bu, F., Aiello, A.~E., Xu, J., and Volfovsky, A. (2022).
\newblock Likelihood-based inference for partially observed epidemics on
  dynamic networks.
\newblock {\em Journal of the American Statistical Association},
  117(537):510--526.

\bibitem[Daley and Gani, 1999]{daley1999epidemic}
Daley, D.~J. and Gani, J.~M. (1999).
\newblock {\em Epidemic modelling: an introduction}.
\newblock Number~15. Cambridge University Press.

\bibitem[Danon et~al., 2011]{danon2011networks}
Danon, L., Ford, A.~P., House, T., Jewell, C.~P., Keeling, M.~J., Roberts,
  G.~O., Ross, J.~V., and Vernon, M.~C. (2011).
\newblock Networks and the epidemiology of infectious disease.
\newblock {\em Interdisciplinary Perspectives on Infectious Diseases},
  2011(1):284909.

\bibitem[Eames et~al., 2015]{eames2015six}
Eames, K., Bansal, S., Frost, S., and Riley, S. (2015).
\newblock Six challenges in measuring contact networks for use in modelling.
\newblock {\em Epidemics}, 10:72--77.

\bibitem[Fintzi et~al., 2017]{fintzi2017efficient}
Fintzi, J., Cui, X., Wakefield, J., and Minin, V.~N. (2017).
\newblock Efficient data augmentation for fitting stochastic epidemic models to
  prevalence data.
\newblock {\em Journal of Computational and Graphical Statistics},
  26(4):918--929.

\bibitem[Groendyke et~al., 2011]{groendyke2011bayesian}
Groendyke, C., Welch, D., and Hunter, D.~R. (2011).
\newblock Bayesian inference for contact networks given epidemic data.
\newblock {\em Scandinavian Journal of Statistics}, 38(3):600--616.

\bibitem[Heesterbeek et~al., 2015]{heesterbeek2015modeling}
Heesterbeek, H., Anderson, R.~M., Andreasen, V., Bansal, S., De~Angelis, D.,
  Dye, C., Eames, K.~T., Edmunds, W.~J., Frost, S.~D., Funk, S., et~al. (2015).
\newblock Modeling infectious disease dynamics in the complex landscape of
  global health.
\newblock {\em Science}, 347(6227):aaa4339.

\bibitem[Hethcote, 2000]{hethcote2000mathematics}
Hethcote, H.~W. (2000).
\newblock The mathematics of infectious diseases.
\newblock {\em SIAM Review}, 42(4):599--653.

\bibitem[Huang et~al., 2024]{huang2024detecting}
Huang, J., Morsomme, R., Dunson, D., and Xu, J. (2024).
\newblock Detecting changes in the transmission rate of a stochastic epidemic
  model.
\newblock {\em Statistics in Medicine}, 43(10):1867--1882.

\bibitem[Istvan et~al., 2019]{istvan2019mathematics}
Istvan, Z., MILLER, K., JOEL, C.~S., and Peter, L. (2019).
\newblock {\em Mathematics of Epidemics on Networks: From Exact to Approximate
  Models}.
\newblock Springer.

\bibitem[Kamkumo et~al., 2025]{kamkumo2025estimating}
Kamkumo, F.~O., Njiasse, I.~M., and Wunderlich, R. (2025).
\newblock Estimating unobservable states in stochastic epidemic models with
  partial information.
\newblock {\em arXiv preprint arXiv:2506.00906}.

\bibitem[Lam et~al., 2022]{lam2022practical}
Lam, N.~N., Docherty, P.~D., and Murray, R. (2022).
\newblock Practical identifiability of parametrised models: A review of
  benefits and limitations of various approaches.
\newblock {\em Mathematics and Computers in Simulation}, 199:202--216.

\bibitem[Louis, 1982]{louis1982finding}
Louis, T.~A. (1982).
\newblock Finding the observed information matrix when using the em algorithm.
\newblock {\em Journal of the Royal Statistical Society Series B: Statistical
  Methodology}, 44(2):226--233.

\bibitem[Morsomme and Xu, 2025]{morsomme2025exact}
Morsomme, R. and Xu, J. (2025).
\newblock Exact bayesian inference for fitting stochastic epidemic models to
  partially observed incidence data.
\newblock {\em The Annals of Applied Statistics}, 19(3):2279--2293.

\bibitem[O’Neill and Roberts, 1999]{o1999bayesian}
O’Neill, P.~D. and Roberts, G.~O. (1999).
\newblock Bayesian inference for partially observed stochastic epidemics.
\newblock {\em Journal of the Royal Statistical Society Series A: Statistics in
  Society}, 162(1):121--129.

\bibitem[Pastor-Satorras et~al., 2015]{pastor2015epidemic}
Pastor-Satorras, R., Castellano, C., Van~Mieghem, P., and Vespignani, A.
  (2015).
\newblock Epidemic processes in complex networks.
\newblock {\em Reviews of Modern Physics}, 87(3):925--979.

\bibitem[Pellis et~al., 2015]{pellis2015eight}
Pellis, L., Ball, F., Bansal, S., Eames, K., House, T., Isham, V., and Trapman,
  P. (2015).
\newblock Eight challenges for network epidemic models.
\newblock {\em Epidemics}, 10:58--62.

\bibitem[Preston et~al., 2025]{preston2025think}
Preston, S.~P., Wilkinson, R.~D., Clayton, R.~H., Chappell, M.~J., and Mirams,
  G.~R. (2025).
\newblock Think before you fit: parameter identifiability, sensitivity and
  uncertainty in systems biology models.
\newblock {\em Current Opinion in Systems Biology}, page 100563.

\bibitem[Reeves et~al., 2024]{reeves2024model}
Reeves, S.~W., Lubold, S., Chandrasekhar, A.~G., and McCormick, T.~H. (2024).
\newblock Model-based inference and experimental design for interference using
  partial network data.
\newblock {\em arXiv preprint arXiv:2406.11940}.

\bibitem[Saucedo et~al., 2024]{saucedo2024comparative}
Saucedo, O., Laubmeier, A., Tang, T., Levy, B., Asik, L., Pollington, T., and
  Prosper, O. (2024).
\newblock Comparative analysis of practical identifiability methods for an seir
  model.
\newblock {\em arXiv preprint arXiv:2401.15076}.

\bibitem[T{\"o}nsing et~al., 2018]{tonsing2018profile}
T{\"o}nsing, C., Timmer, J., and Kreutz, C. (2018).
\newblock Profile likelihood-based analyses of infectious disease models.
\newblock {\em Statistical Methods in Medical Research}, 27(7):1979--1998.

\bibitem[Vecherin et~al., 2026]{vecherin2026infection}
Vecherin, S.~N., Meyer, A.~C., Cummings, C.~L., Trump, B.~D., Ehlschlaeger,
  C.~R., and Linkov, I. (2026).
\newblock Infection risk assessment for socially structured population using
  stochastic microexposure model.
\newblock {\em Journal of Exposure Science \& Environmental Epidemiology},
  36(2):386--397.

\bibitem[Volz and Meyers, 2007]{volz2007susceptible}
Volz, E. and Meyers, L.~A. (2007).
\newblock Susceptible--infected--recovered epidemics in dynamic contact
  networks.
\newblock {\em Proceedings of the Royal Society B: Biological Sciences},
  274(1628):2925.

\bibitem[Yang et~al., 2012]{yang2012hybrid}
Yang, Y., Longini~Jr, I.~M., Halloran, M.~E., and Obenchain, V. (2012).
\newblock A hybrid {EM} and {M}onte {C}arlo {EM} algorithm and its application
  to analysis of transmission of infectious diseases.
\newblock {\em Biometrics}, 68(4):1238--1249.

\end{thebibliography}

\end{document}